\theoremstyle{plain}
\theoremstyle{definition}
\newtheorem{prop}{Proposition}
\newtheorem{theorem}{Theorem}
\newtheorem{remark}{Remark}
\newcommand{\diag}{\mathop{\mathrm{diag}}}
\newcommand{\tr}{\mathop{\mathrm{trace}}}
\newcommand{\argmin}{\mathop{\mathrm{argmin}}}
\title{Damping optimization of discrete mechanical systems -- rod/string model}
\author{
  Ninoslav Truhar\thanks{Corresponding author. E-mail: \texttt{ntruhar@mathos.hr},
    Phone: +385912241037.}\\
  \small School of Applied Mathematics and Informatics,\\
  \small Josip Juraj Strossmayer University of Osijek,\\
  \small Trg Ljudevita Gaja 6, 31000 Osijek, Croatia
  \and
  Kre\v{s}imir Veseli\'c\thanks{E-mail: \texttt{kresimir.veselic@fernuni-hagen.de}}\\
  \small Fernuniversit\"at Hagen,\\
  \small Fakult\"at f\"ur Mathematik und Informatik,\\
  \small Postfach 940, D-58084 Hagen, Germany
}
\date{} 
\begin{document}

\maketitle

\begin{abstract}
This paper investigates two optimization criteria for damping optimization in a multi-body oscillator system with arbitrary degrees of freedom ($n$), resembling string/rod free vibrations. The total average energy over all possible initial data and the total average displacement over all possible initial data. Our first result shows that both criteria are equivalent to the trace minimization of the solution of the Lyapunov equation with different right-hand sides. As the second result, we prove that in the case of damping with one damper, for the discrete system, the minimal trace for each criterion can be expressed as a linear or cubic function of the dimension $n$. Consequently, the optimal damping position is determined solely by the number of dominant eigenfrequencies and the optimal viscosity, independent of the dimension $n$, offering efficient damping optimization in discrete systems. The paper concludes with numerical examples illustrating the presented theoretical framework and results.
\end{abstract}

\vspace{2ex}
\noindent
\textbf{Keywords:} String model, Rod model, Damping optimization, Optimal position of a damper, Lyapunov equation

\bigskip


%
%

\section{Introduction}

One of the crucial problems when attempting to control vibrational systems is the optimal positioning of dampers and the determination of their respective optimal damping values. However, apart from some special cases, the resulting scientific challenges are still open in different disciplines such as e.g. mathematics, engineering, and physics. The problem arises from  considering the wave-like equations, where after an appropriate discretization we consider the dynamics of the second order system of ordinary differential equations, cf. e.\,g.~\cite{BGTQRSS21}.
{The restriction to 1D vibrating systems
(or networks of thereof) is natural,
since meaningful  engineering applications are mostly dealing with only such kind of systems using a finite number of point-dampers.}

In this paper, we will answer the question
of which is the best position for one damper in a given vibrational system (like $n$-mass oscillator in Figure 1).  The best position will be determined by some optimality in the solution of the corresponding system of ODEs, independent of initial data, like initial position and velocity.


For the sake of clarity, let us consider the mechanical system described {by} the system of
ordinary differential equations (ODE)
\begin{eqnarray}\label{MnSys}
& M \ddot{\mathbf{x}}(t) + v \cdot D  \dot{\mathbf{x}}(t) + K \mathbf{x}(t) = \mathbf{0} \,,
\\ &  \mathbf{x}_0=\mathbf{x}(0), \mathbf{v}_0= \dot{\mathbf{x}}(0) \,, \nonumber
\end{eqnarray}
where the mass and the stiffness matrices $M$ and $K$ are symmetric positive definite  real matrices of order $n \times n$. The damping is defined as a rank {1} matrix $D  = \mathbf{e}_k \mathbf{e}_k^T $, where $\mathbf{e}_k \in \mathbb{R}^{n}$ is $k$-th canonical basis vector giving the
 dampers' positions and a parameter~$v$ called viscosity. {The vectors $\ddot{\mathbf{x}}$, $\dot{\mathbf{x}}$, and~$\mathbf{x}$ are $\in \mathbb{R}^{n}$ and denote the acceleration, velocity, and displacement, respectively.}

In particular, the ODE system \eqref{MnSys} can be applied on the vibration {chain} of masses and springs shown {on}  Figure \ref{fig1}.

\begin{figure}[htp]
\begin{center}\centering
\includegraphics[scale=0.6]{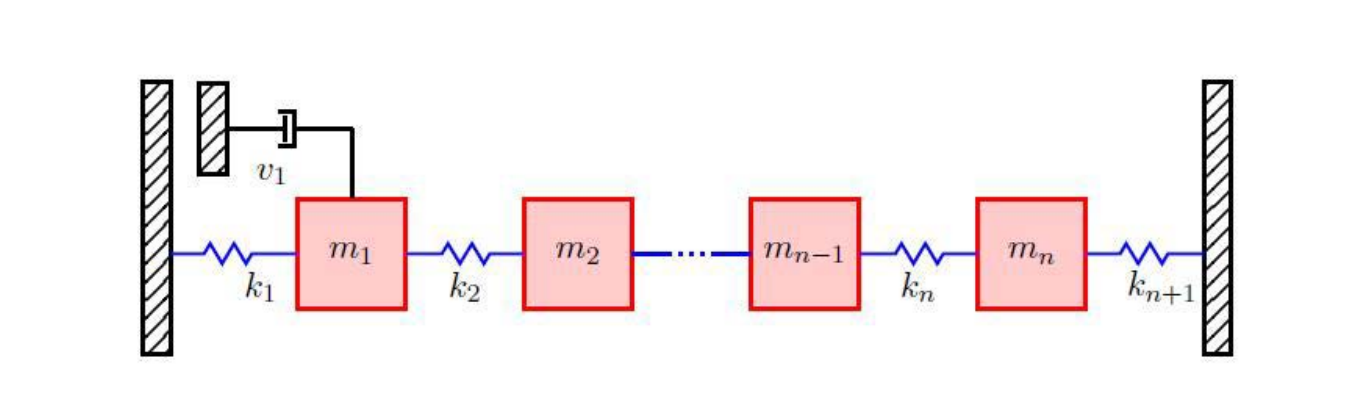}
\caption{The $n$-mass vibration chain with one damper}
\label{fig1}
\end{center}
\end{figure}

Here, the mass matrix is defined as
\begin{equation}\label{matrica M}
M=\diag(m_1,m_2,\ldots,m_n), \quad m_i>0 \,,
\end{equation}
the stiffness matrix is
\begin{equation}\label{matrica K}
K=\left(%
\begin{array}{ccccc}
  k_1+k_2 & -k_2 &  &  &  \\
  -k_2    & k_2+k_3& -k_3 &  &  \\
          & \ddots   & \ddots & \ddots &  \\
          &      & -k_{n-1} & k_{n-1}+k_{n} & -k_n   \\
          &      &  & -k_n & k_n+k_{n+1} \\
\end{array}%
\right).
\end{equation}
In the setting shown on  Figure \ref{fig1}, the damping matrix is
\begin{equation}\label{matrica C}
{v} \cdot D = v_1 \cdot \mathbf{e}_1 \mathbf{e}_1^T, \qquad v_1 > 0 \,,
\end{equation}
with $\mathbf{e}_1 \in \mathbb{R}^n\,, \mathbf{e}_1^T = (1, \, 0, \, \ldots \, , \, 0)$.

Extensive research spanning four decades has focused on the optimal design, placement, and sizing of fluid viscous dampers.
An efficient and systematic procedure for finding the optimal damper positioning to minimize the amplitude of a transfer function of a cantilever beam has been considered in \cite{Takewaki98,Aydin14,SanAvilBer18}.

For the problem of finding ``best damping'', one of the crucial choices is the optimization criterion, which is a choice of a penalty function that has to be minimized. Once the optimization criterion has been defined (or determined) in most (existing) cases one runs through ``all possible positions'' and then the best position is one with the smallest penalty function. Our goal will be to find a way how efficiently one can calculate the best position of {an} (additional) external damper for two different criteria (penalty function).

The first optimization criterion will be
\begin{itemize}
  \item[i)] \textbf{The total average energy over all possible initial data.}
\end{itemize}
 The second criterion will be
 \begin{itemize}
  \item[ii)] \textbf{The total average displacement over all possible initial data.}
\end{itemize}

The problem of finding the optimal positioning of a viscous damper for a linear conservative mechanical system based on an energy criterion has been studied in many papers such as~\cite{GurgozeMuller92,MuellerWeber21,Takewaki98,VES89,VES90},
  or lately in {the} last couple of decades, in~\cite{DymarekDzitkowski21,LiuRaoZhang20, MorzfKawMa2013, TRUHVES05,SilvestriTrombetti06,Cox:04,TRUH04,TRUHVES09}.

On the other hand, the total average displacement over all possible initial data is a novel concept to the best of our knowledge. Usually, the average displacement (amplitude) optimization is connected with problems treated in e.\,g.~\cite{ITakewaki2009,YKanno2013}. Recently, an overview of optimal damper placement methods in different structures has been given in \cite{KookShenZhuLind21}.

Comprehensively study  the controllability and
stability of second order infinite dimensional systems coming from elasticity can be found in \cite{AN15}. Besides many results like stabilization of second order evolution equations by a class of unbounded feedbacks, stabilization of second order evolution equations
with unbounded feedback with delay or systems without delay in \cite{AN15}
as well as in \cite{AmmHerTucs01} authors present results on the optimal location of one damper which calms down the whole undamped spectrum of the corresponding pair $(M,K)$. The optimization criterion used in \cite{AN15} or in \cite{AmmHerTucs01} is similar to the minimization of the total average energy (criterion i)).

To enhance comprehension regarding the main motivation  behind this paper we will briefly explain the first criterion i) \emph{the mean value of the total energy}, which will be explained in more detail in section \ref{subsubsec_criterion_total_energy}.
The minimization of the mean value of the total energy  is equivalent to the minimization of the trace
\begin{equation*}
\tr( Z X(p,v) ) \rightarrow \min \,,
\end{equation*}
where $X$ is a solution of the {Lyapunov} equation
\begin{equation*}
A^T X(p,v) + X(p,v) A = -I_{2n},
\end{equation*}
where $A$ is defined using $M$, $D$ and $K$ (see below equation~\eqref{OptA}), and $v>0$ and $p$ are damper's viscosity and its position, respectively.
Let \(\omega_1, \ldots, \omega_n\) be the undamped eigenfrequencies, that is, the square roots of the eigenvalues of the matrix pair \((K, M)\).

The matrix $Z= Z_s \oplus Z_s$ depends on the part of the spectrum that we try to calm.  For example, if we are interested in the best way to calm the first $s$ undamped eigenfrequencies, that is  $0 < \omega_1 < \omega_2 < \ldots < \omega_{s}$ of the undamped system, the matrix $Z$ will have the following form
\begin{equation*}
Z= Z_s \oplus Z_s\, \qquad Z_s =\begin{bmatrix} I_{s} &  \\  & 0_{(n-s)}  \end{bmatrix}\,.
\end{equation*}

%

Although numerical experiments suggest that the main ideas presented in this paper hold for general
$M$ and $K$, at present we can only provide rigorous proofs for models whose eigensystems admit closed-form expressions.

Through this paper we consider the mechanical system  \eqref{MnSys} where all $m_1 = \ldots = m_n=1$, $k_1 = \ldots = k_{n+1}=1$ in \eqref{matrica M}
and \eqref{matrica K}, respectively. This means that
\begin{equation}\label{string_discret}
M=I_n \,, \qquad K=\left(%
\begin{array}{ccccc}
  2 & -1 &  &  &  \\
  -1    & 2& -1  &  &  \\
          & \ddots   & \ddots & \ddots &  \\
          &      & -1 & 2 & -1   \\
          &      &  & -1 & 2 \\
\end{array}%
\right)\,,
\end{equation}
which corresponds with the discretization of the system of ordinary differential equations \eqref{MnSys}.

Until now, the optimization of damper positions has primarily been approached using heuristic methods, without rigorous proofs.
Examples of such approaches can be found in \cite{BenTomTruh2011}, \cite{BenTomTruh2013} , \cite{BenKurTomTruh2016}.

Numerical experiments indicate that the main ideas presented in this paper hold for a general definite pair $(M, K)$, where
$M$ and $K$ can be simultaneously diagonalized with a positive spectrum. However, at the moment, for the discrete systems,
we can only prove results for $M$ and $K$ as in \eqref{string_discret}.

The main result of this paper shows that, for discrete system, both criteria i) total average energy and ii) the total average displacement,
the optimal position $\textbf{p}_{\textrm{opt}}=p_{\textrm{opt}}/n$,  where $1 \leq p_{\textrm{opt}} \leq n$ depends only on the number of dominant eigenfrequencies $s$ and does not depend on dimension $n$.

To highlight the difference  between the damping of the whole and the
part of the spectrum for the string/rod vibrations, let us consider the system of ODE \eqref{MnSys} of dimension $n=600$. Further, let $M$ and $K$ be as in \eqref{string_discret}.

As the  first case we consider the problem of calming down the whole spectrum (all undamped eigenfrequencies $\omega_1 < \ldots < \omega_{n}$), that is the case when $Z= I_{2 n}$.  The first illustration presents all $n=600$  minimal traces, $\tr( X(v_\textrm{opt},p) )$, $v_\textrm{opt}$ is optimal viscosity  and  $p$ is position $1 \leq p \leq n$ and \textbf{p}=$p/n \in\{\frac{1}{n}, \frac{2}{n}, \ldots, 1\}$ (see Figure \ref{fig1A}).

\begin{figure}[htp]
\begin{center}\centering
\includegraphics[scale=0.324]{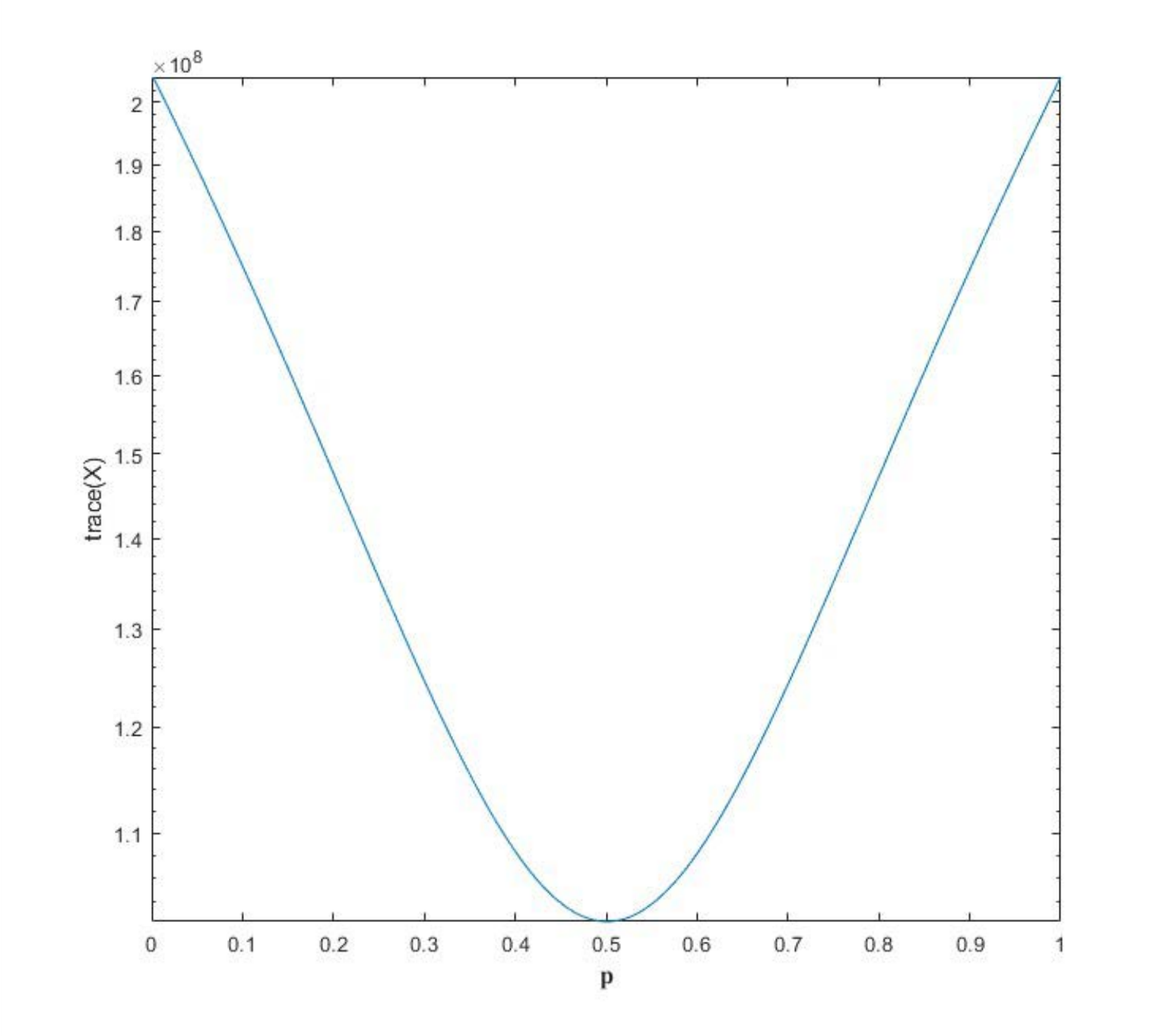}\caption{The optimal trace as a function of positions; damping of the whole spectrum.}
\label{fig1A}
\end{center}
\end{figure}

As one can see from  Figure \ref{fig1A} the optimal position
is located at the middle of the string, consistent with the result presented in \cite{AN15} or in \cite{AmmHerTucs01}.

Although, the main result from \cite{AmmHerTucs01} as well as  Figure \ref{fig1A} shows that the optimal location of just one damper which calms the whole spectrum uniformly has been solved and thus is not a challenge anymore we will show that the problem of calming down part of the spectrum
is completely different and still far away from a general solution.

Thus, the second Figure \ref{fig1B} shows all $n=600$ minimal $\tr( X(v_\textrm{opt},p) )$, $v_\textrm{opt}$ is optimal viscosity and $p$ is position $1 \leq p \leq n$ for the problem when one tries to calm the first 20 undamped eigenfrequencies $\omega_1 < \ldots < \omega_{20}$. This means that we have chosen a model in which the undamped eigenfrequencies $\omega_1 < \ldots < \omega_{20}$ are, in a certain sense, dominant. A similar problem has been considered in~\cite{TRUH04,TRUHVES05,TRUHVES09}. Here $Z= Z_s \oplus Z_s$, and $Z_s$ has rank $s=20$.

\begin{figure}[htp]
\begin{center}\centering
\includegraphics[scale=0.33]{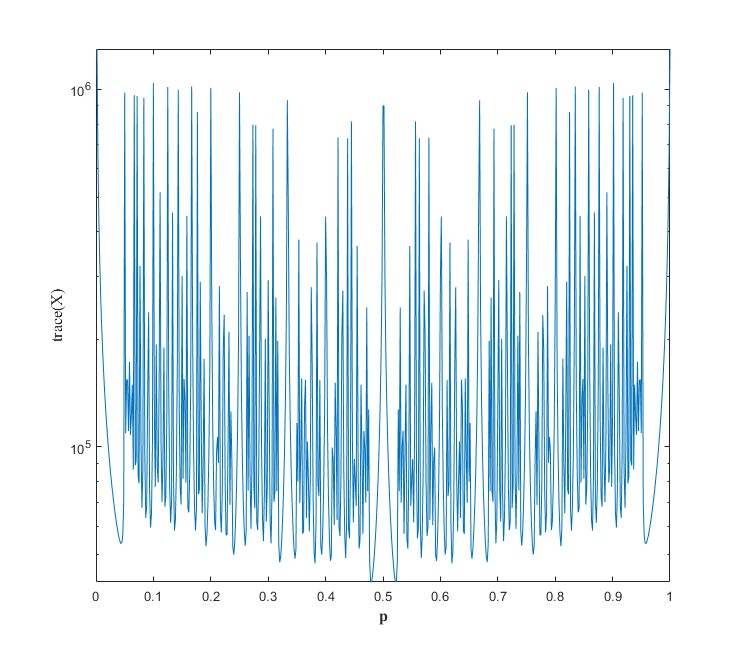}
\caption{The optimal trace as a function of positions;  damping of the part of the spectrum.}
\label{fig1B}
\end{center}
\end{figure}

As one can see from Figure \ref{fig1B} the optimal location of the damper is at $0.48$ (or due to the symmetry at $0.52$)  with several different local minima.

Moreover, if one is interested in calming down some small part of the undamped spectrum $\omega_{i+1} < \ldots < \omega_{i+s}$, with $s \ll n$ we will show that the optimal position depends on $s$ as well as $i$.

In that sense Figure \ref{fig1C} shows the optimal traces when one tries to calm down spectrum $\omega_{51} < \ldots <\omega_{70}$.
The best position is at $0.0083$.

\begin{figure}[htp]
\begin{center}\centering
\includegraphics[scale=0.5]{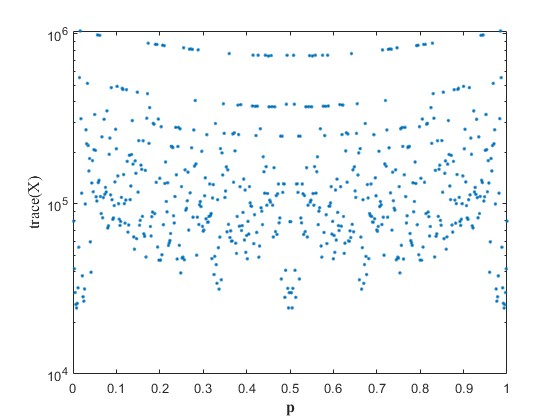}
\caption{The optimal trace as a function of positions;  damping of the part of the spectrum.}
\label{fig1C}
\end{center}
\end{figure}

The above illustrations show that the calculation of the optimal locations together with the corresponding viscosity may be a very demanding task.

In this paper, we will present several new results connected with the
calculation of the optimal location together with the corresponding viscosity
for one damper which calms down a part of the spectrum of
string or rod model described by ODE system \eqref{MnSys} discretized such that \eqref{string_discret} holds.

The first result is a new approach to defining the \emph{total average displacement criterion over all possible initial data}, criterion ii).
Based on the formula for the trace of the solution of Lyapunov equation
from \textsc{Veseli\'{c}} \cite{VES90}  or \cite{BRAB98} we develop
a new very efficient numerical procedure for calculating the optimal viscosity for the criterion of the total average displacement over all possible initial data.

The second result, which we have already mentioned,  claims that the optimal position of the damper depends only on the number of dominant eigenfrequencies $s$  and their location $i$ ($\omega_{i+1} < \ldots < \omega_{i+s}$)  and does not depend on dimension $n$. It holds for both criteria.

The paper is organized as follows. In Section \ref{subsec_opt_criteria}
we present two optimization criteria. The \emph{average total energy} criterion is briefly presented in Section \ref{subsubsec_criterion_total_energy}. In Section \ref{subsubsec_criterion_total_disp} we present a novel approach in more detail for the \emph{average total displacements} criterion.
Section \ref{sec_optimal_positions} contains a precise analysis of the
string or rod free vibrations, with exact expressions for undamped
eigenfrequencies as well as corresponding eigenvectors.
Sections \ref{sec_optimal_damp_positions} and \ref{Opt pos_single_damper2}
contain some auxiliary results and notations. The main results one can find in Sections \ref{Opt_trace_aver_Energy} and \ref{Opt_trace_aver_Displac}, where we have shown that the minimal trace for the \emph{average total energy} criterion is a linear function and for the \emph{average total displacements} criterion is a cubic function of dimension $n$, respectively. Finally in  Section  \ref{sec_numerical_example} we present several numerical examples which illustrate results from previous sections.

Throughout the paper, $M$ denotes the mass matrix, $K$ represents the stiffness matrix, and $c$ is a vector used to define the corresponding damping matrix $C = v c c^T $, where $ v > 0 $ is a real parameter. The values of $ M $, $ K $, and $c$ may vary depending on the specific model (e.g., string, road, or discrete mass--spring system).

\section{Optimization criteria\label{subsec_opt_criteria}}

Although the system depicted in Figure \ref{fig1} assumes that the mass matrix is diagonal, our approach is more general and we can treat any definite pair~$(M, K)$.

In fact, in our approach, as a preprocessing step, an eigenvalue decomposition of the pair $(M, K)$  is done. Since $M$ and $K$ are symmetric positive definite there exists a nonsingular matrix $\Phi$ such that
\begin{equation}\label{simult.diagofMK}
\Phi^T K \Phi = \Omega^2= \diag (\omega_1^2, \ldots, \omega_n^2 )
\quad \text{and} \quad \Phi^T M \Phi = I_n\,,
\end{equation}
$0 < \omega_1 < \omega_2 < \ldots < \omega_n$ ($\omega_i$ are the eigenfrequencies of the undamped system). Now, multiplying equation~\eqref{MnSys} from the left-hand side with $\Phi^T$ and using equation~\eqref{simult.diagofMK} we get
\begin{eqnarray}\label{MnSys-PhiT}
\Phi^{-1} \ddot{\mathbf{x}}(t) + v \cdot \Phi^T D \Phi \Phi^{-1}  \dot{\mathbf{x}}(t) + \Omega^2 \Phi^{-1} \mathbf{x}(t) = \mathbf{0} \,,
\end{eqnarray}
where $D = v \cdot \mathbf{e}_k \mathbf{e}_k^T $, represents a damper at $k$-th position ($k\in \{1, 2, \ldots, n\}$). By introducing the substitution
\begin{eqnarray}\label{FirstOrdODE}
\mathbf{y}_1(t) =  \Omega \Phi^{-1} \mathbf{x}(t) \,, \quad \mathbf{y}_2(t) = \Phi^{-1} \dot{\mathbf{x}}(t) \,,
\end{eqnarray}
we get the corresponding first order ODE,
\begin{eqnarray}
\frac{\mathrm{d}}{\mathrm{d} t}\begin{bmatrix} \mathbf{y}_1(t) \\ \mathbf{y}_2(t) \end{bmatrix} =
  \begin{bmatrix} 0 & \Omega \\ -\Omega & -C   \end{bmatrix} \begin{bmatrix} \mathbf{y}_1(t) \\ \mathbf{y}_2(t) \end{bmatrix} =: A\begin{bmatrix} \mathbf{y}_1(t) \\ \mathbf{y}_2(t) \end{bmatrix}
\,,\label{OptA}
\end{eqnarray}
where the damping matrix is
\begin{eqnarray}\label{Def:dampC}
C =  v \cdot \Phi^T D \Phi \,.
\end{eqnarray}

\subsection{Optimization criterion -- average total energy\label{subsubsec_criterion_total_energy}}

Note that, if we write
\begin{eqnarray*}
\mathbf{y} = \begin{bmatrix} \mathbf{y}_1(t) &  \mathbf{y}_2(t) \end{bmatrix}^T\,,
\end{eqnarray*}
from equation~\eqref{FirstOrdODE} follows
\begin{equation*}
   \mathbf{y}(t)^T \mathbf{y}(t) =
{\| \mathbf{y}_1(t) \|}^2 + {\| \mathbf{y}_2(t) \|}^2 =  \mathbf{x}^T K \mathbf{x} + \dot{\mathbf{x}}^T M \dot{\mathbf{x}} = 2 E(t) \,.
\end{equation*}
In other words, the \textsc{Euclidian} norm of this phase-space representation
equals twice the total energy of the system. From this, it follows that
{\em all phase space matrices are unitarily equivalent}. Thus, for all
total-energy relevant considerations, we may choose any of these
representations at our convenience.

For the first optimization criterion, we will use a minimization of the mean value of the total energy.
In \cite{NAKIC02,VesBrabDel01} it has been shown that this optimization criterion is equivalent with
the minimization of the trace
\begin{equation}\label{Trace_0}
\tr( Z X ) \rightarrow \min \,,
\end{equation}
where $X$ is a solution of the {Lyapunov} equation
\begin{equation}\label{Lyap eq 1}
A^T X + X A = -I,
\end{equation}
where $A$ is defined in equation~\eqref{OptA}
and $Z= Z_s \oplus Z_s$ depends on the part of the spectrum that we try to calm.

For example, if we are interested in the best way to calm the first $s$ eigenfrequencies
$0 < \omega_1 < \omega_2 < \ldots < \omega_{s}$ of the undamped system, the matrix $Z$ will have the following form
\begin{equation}\label{def_Z}
Z= Z_s \oplus Z_s\, \qquad Z_s =\begin{bmatrix} I_{s} &  \\  & 0_{(n-s)}  \end{bmatrix}\,.
\end{equation}
For more details about the construction of the matrix $Z$, see, for example, \cite{NAKIC02}.

It is easy to show that the trace minimization \eqref{Trace_0} with {Lyapunov} equation
\eqref{Lyap eq 1} is equivalent to the trace minimization of the solution of the so-called dual
{Lyapunov} equation of the form
\begin{equation}\label{Lyap eq Dual}
A Y + Y A^T = -Z,
\end{equation}
since
\begin{equation}\label{DualXandY}
\tr( Y ) = \tr( Z X )\,.
\end{equation}


\subsection{Optimization criterion -- average total displacements\label{subsubsec_criterion_total_disp}}

Since, to the best of our knowledge  similar theory for the average total displacements as the above one (for the average total energy) has not been presented yet, in this section we will present the basic results on {Lyapunov} theory for lowering displacements, which will be our second criterion.
%

Recall that in equation~\eqref{FirstOrdODE} we have defined
\begin{equation*}
\mathbf{y}_1(t) =  \Omega \Phi^{-1} \mathbf{x}(t) \,, \quad \mathbf{y}_2(t) = \Phi^{-1} \dot{\mathbf{x}}(t) \,,
\end{equation*}
which together with equation~\eqref{OptA} gives
\begin{equation*}
\frac{\mathrm{d} \mathbf{y} }{\mathrm{d} t} = A \mathbf{y} \,,
  \end{equation*}
with
\begin{equation}\label{A}
A=\begin{bmatrix} \mathbf{0} & \Omega \\ -\Omega & -C   \end{bmatrix}\,,
\end{equation}
which is solved by
\begin{equation}\label{22}
\mathbf{y}= e^{At}
\left[%
\begin{array}{c}
  \mathbf{y}_{10} \\
  \mathbf{y}_{20} \\
\end{array}
\right]   \,,
 \end{equation}
where $\mathbf{y}_{0}=\begin{bmatrix} \mathbf{y}_{10} & \mathbf{y}_{20} \end{bmatrix}^T$ contains {the} initial data.

Let
\begin{align*}
  K = U_K \Lambda_K U_K^T \,,
\end{align*}
be the eigenvalue decomposition of the matrix $K$. Note that
\begin{align*}
   \Omega^{-1} \Phi^{T} U_K \Lambda_K U_K^T \Phi \Omega^{-1} = I_n\,,
\end{align*}
which means that $\Omega^{-1} \Phi^{T} U_K$ is unitary similar to the diagonal matrix
$\Lambda_K^{-1/2}$, which further implies that we can write the singular value decomposition
\begin{align*}
   U_K^T \Phi \Omega^{-1} = U \Lambda_K^{-1/2} V^T \,,
\end{align*}
or
\begin{align} \label{defhatK}
   \Omega^{-1} \Phi^{T} \Phi \Omega^{-1}  = V \Lambda_K^{-1} V^T \doteq \widehat{K}^{-1} \,,
\end{align}
which means that $\widehat{K}$ is unitary similar to $K$, here $U$ and $V$ are orthogonal matrices.

The quantity to be minimized is the mean displacement \(\hat{\mathbf{x}}\) given as
\begin{align}\label{1-hatx}
\hat{\mathbf{x}}^2 = &  \int_0^\infty\|\mathbf{x}(t)\|^2\mathrm{d}t = \int_0^\infty \mathbf{y}_1(t)^T \Omega^{-1} \Phi^{T} \Phi \Omega^{-1} \mathbf{y}_1(t)\mathrm{d}t  \nonumber \\
& = \int_0^\infty\left(P_1e^{At}\mathbf{y}_0\right)^T \widehat{K}^{-1}P_1e^{At}\mathbf{y}_0 \mathrm{d}t,
 \end{align}
where $\widehat{K}$ is defined in equation~\eqref{defhatK} and
\[
P_1 =
\left[%
\begin{array}{cc}
I_n & \mathbf{0} \\
\end{array}
\right].
\]
So,
\begin{eqnarray}\label{2-hatx}
\int_0^\infty\|\mathbf{x}(t)\|^2\mathrm{d}t  =
\mathbf{y}_0^T\hat{X}\mathbf{y}_0, & \\
\hat{X} = \int_0^\infty e^{A^Tt}Ze^{At}\mathrm{d}t, &
Z = P_1^T\widehat{K}^{-1}P_1 =
\left[\begin{array}{cc}
\widehat{K}^{-1} & 0 \\
0      & 0 \\
\end{array}
\right] , \label{Zfordisplacmt}
\end{eqnarray}
where \(\hat{X}\) solves the {Lyapunov} equation
\begin{equation}\label{lyaphat}
A^T\hat{X} + \hat{X} A = -Z\,.
 \end{equation}
\begin{prop} The matrix \(\hat{X}\) is symmetric positive definite.
\end{prop}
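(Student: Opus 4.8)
The plan is to read off both properties directly from the integral representation of $\hat X$ already established in~\eqref{2-hatx}. Under the standing assumption that the damped system~\eqref{OptA} is asymptotically stable --- equivalently, that $A$ is Hurwitz, which is exactly what makes the displacement integral in~\eqref{1-hatx} finite --- the Lyapunov equation~\eqref{lyaphat} has the unique solution
\[
\hat X \;=\; \int_0^\infty e^{A^T t}\, Z\, e^{A t}\,\mathrm{d}t .
\]
Since $Z = P_1^T \widehat K^{-1} P_1$ is symmetric (recall $\widehat K^{-1} = V\Lambda_K^{-1}V^T$ is positive definite by~\eqref{defhatK}), every integrand $e^{A^T t} Z e^{A t}$ is symmetric, and hence so is $\hat X$.

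For positive semidefiniteness I would note that for an arbitrary $\mathbf{y}_0 \in \mathbb{R}^{2n}$,
\[
\mathbf{y}_0^T \hat X \mathbf{y}_0 \;=\; \int_0^\infty \bigl(e^{A t}\mathbf{y}_0\bigr)^T Z\, \bigl(e^{A t}\mathbf{y}_0\bigr)\,\mathrm{d}t \;=\; \int_0^\infty \bigl\lVert \widehat K^{-1/2} P_1\, e^{A t}\mathbf{y}_0 \bigr\rVert^2 \,\mathrm{d}t \;\ge\; 0 ,
\]
where $\widehat K^{-1/2}$ denotes the symmetric (nonsingular) square root of $\widehat K^{-1}$. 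It then remains to exclude equality for $\mathbf{y}_0 \neq \mathbf{0}$. If $\mathbf{y}_0^T \hat X \mathbf{y}_0 = 0$, then the nonnegative continuous integrand vanishes identically, so $\widehat K^{-1/2} P_1 e^{A t}\mathbf{y}_0 = \mathbf{0}$ for all $t\ge0$; since $\widehat K^{-1/2}$ is nonsingular, this forces $P_1 e^{A t}\mathbf{y}_0 = \mathbf{y}_1(t) \equiv \mathbf{0}$. Differentiating and invoking the top block-row $\dot{\mathbf{y}}_1 = \Omega\,\mathbf{y}_2$ of~\eqref{OptA} together with the invertibility of $\Omega$ yields $\mathbf{y}_2(t)\equiv\mathbf{0}$ as well, so $\mathbf{y}_0 = \mathbf{y}(0) = \mathbf{0}$ --- a contradiction. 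Hence $\hat X$ is positive definite.

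An equivalent way to organise the last step is via observability of the pair $(A, Z)$: any eigenvector of $A$ contained in $\ker Z = \{\mathbf{0}\}\oplus\mathbb{R}^{n}$ would have the form $(\mathbf{0},\mathbf{w}_2)$ and satisfy $\Omega\,\mathbf{w}_2 = \mathbf{0}$, i.e.\ be zero; since no $A$-invariant subspace lies in $\ker Z$, the integral $\int_0^\infty e^{A^T t} Z e^{A t}\,\mathrm{d}t$ is positive definite. The only genuinely non-routine point is this strict inequality, and there the decisive observation is structural: the ``output'' $Z$ only measures the displacement block $\mathbf{y}_1$, but the off-diagonal coupling $\dot{\mathbf{y}}_1 = \Omega\,\mathbf{y}_2$ with $\Omega$ positive definite immediately propagates that information to the velocity block, so that no nontrivial trajectory can remain in $\ker Z$. (The asymptotic stability of $A$ is used only to guarantee that $\hat X$ is well defined; for the string/rod models treated later it holds at the damper positions of interest.)
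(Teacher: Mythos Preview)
Your argument is correct and follows essentially the same route as the paper: use the integral representation of $\hat X$ to get positive semidefiniteness, then rule out a nontrivial kernel by observing that $\mathbf{y}_1(t)\equiv 0$ forces $\mathbf{y}_2(t)\equiv 0$ via the coupling $\dot{\mathbf{y}}_1=\Omega\,\mathbf{y}_2$ with $\Omega$ invertible. You are simply more explicit than the paper (you also spell out symmetry and the Hurwitz hypothesis, and add the equivalent observability formulation), but the underlying idea is identical.
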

\begin{proof} \(Z\) is positive semidefinite,  so is \(\hat{X}\) as well.
Assume \(\hat{X}z = 0\), then by equation~(\ref{2-hatx})
and the positive definiteness of $\widehat{K}$
\[
\mathbf{x}(t) = \widehat{K}^{-1/2}P_1e^{At}z = 0,\quad \forall t \geq 0.
\]
hence also \(\dot{\mathbf{x}}(t) = \mathbf{0}\) for all \(t \geq 0\) {and thus} \(z = 0\).

\end{proof}

The unit sphere average
\[
\tr(\hat{X}) =  \int_0^\infty \tr \left(e^{At}e^{A^Tt}Z\right) \mathrm{d}t = \tr \left(JXJZ\right) =
\tr \left(XZ\right)
\]
where the matrix \(J\) is defined as
\[
J=\begin{bmatrix} I_n & \mathbf{0} \\ \mathbf{0} & -I_n \end{bmatrix} \,,
\]
and \(X\) solves the standard {Lyapunov}
\[
A^TX + X A = -I \,.
\]
We have used the \(J\)-symmetry of \(A\) and the fact that \(J\)
and \(Z\) commute.

Frequency-cut average with a frequency-cut projection \(P\) can be obtained as
\begin{align*}
\tr(\hat{X}P) & =  \tr \int_0^\infty \left(e^{A^Tt}Ze^{At}P\right)\mathrm{d}t  \\
& = \tr \left(Z\int_0^\infty e^{At}Pe^{A^Tt} \mathrm{d}t \right) = \tr (Z \hat{Y}) \,,
 \end{align*}
where
\[
A\hat{Y} + \hat{Y} A^T = -P \,,
\]
and since \(P,\ J\) commute and also \(Z,\ J\)
\[
\tr(\hat{X}P) = \tr (Z \tilde{Y})
\]
where \( \tilde{Y}\) solves the standard {Lyapunov} equation
\[
A^T\tilde{Y} + \tilde{Y} A = -P \,.
\]

Note, that in both criteria one needs to minimize a trace of the corresponding {Lyapunov} equation,
with the same system matrix but a different right-hand side.

As an illustration that the optimal viscosities are different for each criterion, we present a small example.

\begin{example}

Consider the one dimensional oscillator
\[
m\ddot{x} + c\dot{x} + kx = 0\,,
\]
and the {Lyapunov} equation
\[
\left[\begin{array}{cc}
0 & -\sqrt{k/m} \\
\sqrt{k/m}  & -c/m \\
\end{array}
\right]
\left[\begin{array}{cc}
x_{11} & x_{12} \\
x_{12}   & x_{22} \\
\end{array}
\right]
+
\left[\begin{array}{cc}
x_{11} & x_{12} \\
x_{12}   & x_{22} \\
\end{array}
\right]
\left[\begin{array}{cc}
0 & \sqrt{k/m} \\
-\sqrt{k/m}  & -c/m \\
\end{array}
\right]
\]
\[
=
\left[\begin{array}{cc}
-a & 0 \\
0    & -b \\
\end{array}
\right]
\]
with \(a,b \geq 0,\ a+ b > 0\).
This gives the equations
\[
-2 \sqrt{k/m} x_{12} = -a\qquad
\sqrt{k/m}(x_{11} - x_{22}) - x_{12}c/m = 0
\]
\[
x_{12}\sqrt{k/m} - x_{22}c/m + x_{12}\sqrt{k/m} - x_{22}c/m = -b\,.
\]
Hence
\[
x_{12} = \frac{a}{2\sqrt{k/m}} =  \frac{a}{2}\sqrt{m/k} \,,
\]
\[
(x_{22} - x_{11})\sqrt{k/m} +  \frac{ac}{2\sqrt{km}} = 0\,,
\]
\[
x_{22} = \left(\frac{a}{2} + \frac{b}{2}\right)m/c \,,
\]
\[
x_{11} =   x_{22} +  \frac{ac}{2k} = \frac{a+b}{2}m/c + \frac{ac}{2k}.
\]
 For \(a = b = 1\) we obtain the trace for the average energy criterion
\[
\tr(X_1) = 2m/c + \frac{c}{2 k}\,,
\]
with the known minimum at the critical damping \(c = 2\sqrt{mk}\).
In the case of the average displacement criterion we have \(a = 1/k,\ b = 0\) giving the trace
\[
\tr(X_2) = m/(kc) + c/(2k^2) \,,
\]
again with a unique minimum  \(c = \sqrt{2} \sqrt{mk}\) obviously different from the above one.
\end{example}

This shows that further study of the properties of both criteria
will be an interesting issue, especially a comparison between them in the sense of quality of the solution and complexity of the calculation. However, in this paper, we will not further consider the optimization criteria, this will be a subject of our future studies.
In the rest of the paper, we will concentrate only on the position
optimization for both criteria for the vibrations of the structured models.

\section{Vibrations of rod/string and discrete mass--spring model}
\label{sec_optimal_positions}


Our main result concerns string or rod vibrations described with the following partial differential equation:
\begin{align}\label{string}
 \rho(x) u(x,t)_{tt} + c(x) u(x,t)_t - (k(x)u(x,t)_x)_x = 0\,, & u(0,t)=u(1,t)=0 \,, \quad
  x \in (0,1)\,,
\end{align}
with point damping $c(x) = \delta(x-y)$ concentrated in the position $x = y$.

The advantage of the equation \eqref{string} is in the fact that it depends continuously on $y$ allowing analytical minimization. Transport this
to the discretization. The damper position $y$ must stay continuous. Take $\rho(x) = 1$, $k(x) = 1$ for simplicity. Using the method of separating variables or the Fourier method one gets the undamped system
\begin{align*}
 \omega^2 u + u(x,t)_{xx} = 0\,,
\end{align*}
which is explicitly diagonalized with the undamped frequencies $\omega = j \pi$  and the orthonormal eigenvectors
\begin{align*}
 u_{k}(x) = \frac{1}{\sqrt{2}}  \sin{k \pi x} \,.
\end{align*}
In the weak formulation of the formal equation \eqref{string} the delta damper is represented by the quadratic form
\begin{align*}
 \omega(u,v)  = d u(y) v(y) \,, d \geq 0 \,,
\end{align*}
to which there corresponds the matrix $C(y)$ in the basis $u_k$, given as
\begin{align*}
 C_{kj} = \frac{d}{2}  \sin{(k \pi y)}  \sin{(j \pi x)}\,.
\end{align*}
Now a discretization consists in cutting a finite matrix out of these.

This leads to the finite dimensional system with
\begin{align*}
K = \diag(\omega_1^2, \ldots, \omega_n^2) \,, M=I_n\,, C=C(y) = d c(y) c(y)^T\,,
\end{align*}
with
\begin{align}\label{cody}
c(y) = \frac{1}{\sqrt{2}} \begin{bmatrix} \sin{(\pi y)} & \sin{(2 \pi y)} & \ldots & \sin{(n \pi y)} \end{bmatrix}^T \,.
\end{align}
More details about the properties of the vibrating system, described by the  partial differential equation \eqref{string} can be found in \cite[4.1 The Wave Equation]{AN15}.

If $\rho$ and $k$ are not constant we may still use the Fourier decomposition above and by common integrals compute the coefficients of the matrices $M$ and $K$ as follows
\begin{align*} 
M_{kj} = \frac{1}{2} \int\limits_0^1 \sin{(k \pi x \rho(x))}  \sin{(j \pi x)} dx \,, \\
K_{kj} = \frac{k j \pi^2}{2} \int\limits_0^1 \cos{(k \pi x k(x))}  \cos{(j \pi x)} dx \,,
\end{align*}
and $C$ as above. The only difference is that now the matrices $M$ and $K$ are not diagonal anymore.
Note that choosing other boundary conditions would lead to other trigonometric functions.
Thus, we end up with minimizing the Lyapunov trace for one-dimensional damping as described in \cite{VES90} which is given by an explicit formula as a function of $y$. This can again be
minimised either analytically or numerically or just by plotting.

To do this we must first normalise $c(y)$ to a constant norm that is independent of $y$.
We have
\begin{align}
\|c(y)\|^2 & =  \frac{1}{2} \sum\limits_{k=1}^n
\sin^2{(k \pi y)} = \frac{n}{4} - \frac{1}{4}
\sum\limits_{k=1}^n \cos{(2 k \pi y)} = \nonumber \\
& \frac{n}{4} - \frac{1}{4} \emph{Re}
\left( \sum\limits_{k=1}^n e^{2 k \pi \imath y} \right) = \nonumber  \\
& \frac{n}{4} - \frac{1}{4} \emph{Re}
\frac{ e^{2 \pi \imath y} (1 -  e^{2 \pi \imath n y}) }{1 -  e^{2 \pi \imath n y}} \,, 0 < y < 1 \,.
\label{normc}
\end{align}

The same approach can be applied to the rod  model
\begin{align}\label{rod/string}
 \rho(x) u(x,t)_{tt} + c(x) u(x,t)_t - (k(x)u(x,t)_x)_x + (a(x) u(x,t)_{xx})_{xx}  = 0\,, \\ u(0,t)=u(x,t)_{xx}(0,t)=u(1,t)=u(x,t)_{xx}(1,t)=0 \,. \nonumber
\end{align}
If the material is homogeneous, say, $\sigma(x) = \sigma_0 > 0$, $\rho(x) = 1$, $k(x) = k_0$ and $a(x) = a_0$ then the stationary undamped equation
\begin{align} \label{rodstr_stat}
-\omega^2 u - k_0  u(x,t)_{xx} + a_0 u(x,t)_{xxxx} = 0
\end{align}
has again the eigenfunctions
\begin{align*}
u_k(x) =  \frac{1}{\sqrt{2}} \sin{k \pi x} \,,
\end{align*}
with the eigenfrequencies
\begin{align*}
\omega_k = k \pi \sqrt{k^2\pi^2 a_0 +k_0} \,.
\end{align*}
This was the spectral discretisation of the continuous system. Another way is to start from the finite dimensional case as in \eqref{matrica M} and
\eqref{matrica K}, that is
\begin{equation*}
M=\diag(m_1,m_2,\ldots,m_n), \quad m_i>0 \,,
\end{equation*}
\begin{equation*}
K=\left(%
\begin{array}{ccccc}
  k_1+k_2 & -k_2 &  &  &  \\
  -k_2    & k_2+k_3& -k_3 &  &  \\
          & \ddots   & \ddots & \ddots &  \\
          &      & -k_{n-1} & k_{n-1}+k_{n} & -k_n   \\
          &      &  & -k_n & k_n+k_{n+1} \\
\end{array}%
\right).
\end{equation*}
For $k_i = m_i = 1$ the eigenfrequencies are
\begin{align*}
\omega_l =   2 \sin\frac{l \pi}{2 (n+1)}\,,  l= 1, \ldots, n \,.
\end{align*}
and the eigenvector matrix $Q = (q_{rl})$ given as
\begin{align*}
q_{rl} =  \sqrt{\frac{2}{n+1}} \sin \frac{r l \pi}{n+1} \,.
\end{align*}
That is, the $k$-th eigenvector is given by
\begin{align} \label{eigvecMK}
u_k = \sqrt{\frac{2}{n+1}} \begin{bmatrix} \sin{\frac{1 k \pi }{n+1}} & \sin{\frac{2 k \pi }{n+1}}& \ldots & \sin{\frac{n k \pi }{n+1}} \end{bmatrix}^T
\end{align}
This case is interesting in itself and also because it can be understood as a discretisation of the continuous string. Recall that we have denoted by $Q = (q_{rl})$ the eigenvector matrix. This matrix happens also to be symmetric, so its $k$-th column, that is, the vector $u_k$ from  \eqref{eigvecMK} is at the same time the representation of the canonical unit vector $e_k$ in the orthonormal basis $u_1, \ldots, u_n$. Our
damped system with the damping matrix $C = v e_k e_k^T$ is now equivalent with the system
\begin{align*}
M=I_n\,, K = \diag(\omega_1^2, \ldots, \omega_n^2) \,, C = v u_k u_k^T \,,
\end{align*}
which describes the damper at the position $k$. We now make the discrete variable $k$ continuous by setting
\begin{align*}
C(z) = v c(z) c(z)^T \,,
\end{align*}
with
\begin{align} \label{eigvecMK-cont}
c(z) = \sqrt{\frac{2}{n+1}} \begin{bmatrix} \sin{\frac{1 z n \pi }{n+1}} & \sin{\frac{2 z n \pi }{n+1}}& \ldots & \sin{\frac{n z n \pi }{n+1}} \end{bmatrix}^T \,, 0 < z \leq 1 \,,
\end{align}
such that
\begin{align*}
c\left(\frac{k}{n} \right) = u_k \,.
\end{align*}

Now, this is strikingly analogous to the spectral discretisation we made previously. Moreover, for large $n$ the vectors $c(z)$ here and there coincide. The normalization formulae here are quite analogous to those in \eqref{normc}. Here too, this construction has started from the special stiffness matrix $K$ above but it can serve with any 1D vibrational system where the boundary conditions should better be
taken accordingly.

\subsection{Optimal dampers' position -- discrete mass spring mechanical system}
\label{sec_optimal_damp_positions}

In this section, we will present the main result of the optimal position of one damper for the  discrete mass spring mechanical system.

We will show, that optimal position is a simple function of the dimension $n$, which means that we can calculate the optimal position for one damper on some smaller dimensions and then estimate it for a general case.

We focus on a single damper due to the fact that only for
one damper there exists an explicit expression for the trace of the solution to the corresponding Lyapunov equation.

The initial criterion to be applied is the total average energy criterion. As outlined in section \ref{subsec_opt_criteria}, we will consider the Lyapunov equation
\begin{align*}
A X + X A^T = -Z \,, 
  \end{align*}
where $A$ is defined in \eqref{A} and
\begin{align} \label{DefZ2b}
Z_1= Z_s \oplus Z_s\, \qquad Z_s =\begin{bmatrix} I_{s} &  \\  & 0_{(n-s)}  \end{bmatrix}\,.
 \end{align}
Our goal is to find the minimal $tr(X)$, which corresponds with the minimization of the total average energy.
Further,
\begin{align*}
 \Omega= \diag{(\omega_1, \ldots, \omega_n)}\,,
 \omega_j = 2 \sin{\frac{j \pi}{2 (n+1)}}\,.
  \end{align*}
For a given position $p_k $, the damping matrix $C$ is defined as
\begin{align} \label{def_damping_string}
 C = c c^T \,,
  \end{align}
where for $p_k  \in\{\frac{1}{n}, \frac{2}{n}, \ldots, 1\}$
\begin{align*}
c(p_k ,n) = \sqrt{\frac{2}{n+1}} \begin{bmatrix} \sin{\frac{1 p_k  n \pi }{n+1}} & \sin{\frac{2 p_k  n \pi }{n+1}}& \ldots & \sin{\frac{n p_k  n \pi }{n+1}} \end{bmatrix}^T \,.
\end{align*}
Note, that $p_k=k/n$ is a discrete set of points defined by continuous variable $0< z \leq 1$ from \eqref{eigvecMK-cont}.

Since,
\begin{align*}
\omega_j= \omega_j(n)\,, 
\end{align*}
 we see that the $\tr(X)$ is a function of dimension $n$ positions $p_k $, and viscosity $v$, that is
\begin{align*}
\tr(X) = \tr(X(p_k , v, n))\,.
\end{align*}

We will show that for a fixed position $\mathbf{p}$ (which does not depend on $n$) defined as
\begin{align} \label{DefposGeneral}
\mathbf{p} \in (0, 1) \,,
\end{align}
the optimal  $\tr(X(\mathbf{p}_{\rm {opt}}, v_{\rm {opt}}))$
is a simple function (linear for ``energy criterion''  or cubic
for ``displacement'' criterion) of dimension $n$.
Here the optimal $\tr(X(\mathbf{p}_{\rm {opt}}, v_{\rm {opt}}, n))$ is defined with optimal position and corresponding viscosity
\begin{align*}
(\mathbf{p}_{\rm {opt}}, v_{\rm {opt}}) =
\argmin_{\mathbf{p}, v} \tr(X(\mathbf{p} , v ))\,.
\end{align*}

%


\subsection{Optimal position of one damper for the  discrete mass--spring model}
\label{Opt pos_single_damper2}

As previously highlighted, determining the optimal placement of several  dampers poses a significant challenge. As we will illustrate below, even in the case of the rod/string model, where explicit expressions for all undamped eigenfrequencies and eigenvectors are available, positioning multiple dampers remains an unresolved issue.

Thus, in the subsequent section, we will present our main results regarding the properties of the trace function of the solution to the Lyapunov equation in the case of one damper. These findings will enable us to identify the optimal position for a single damper in the discrete mass--spring model, applicable across all dimensions.


The main tool will be a modified formula obtained by \textsc{Veseli\'{c}} in~\cite{VES90} in the form documented in~\cite{BRAB98} for the trace of the {Lyapunov} equation
\begin{equation} \label{LyapEq1Dnum}
A^T X + X A = -I_{2n},
\end{equation}
where $A$ is defined  as
\begin{equation}\label{Def_MatrixANum}
A =\begin{bmatrix} 0 & \Omega \\ -\Omega & - v \mathbf{c} \mathbf{c}^T   \end{bmatrix}\,,
\end{equation}
and  $v>0$  is the viscosity parameter which has to be optimized
and for a given position $\mathbf{c}=\begin{bmatrix} c_1 & c_2 & \ldots & c_n \end{bmatrix}^T$, $c_i \neq 0$.

The trace of the solution $X$ multiplied by diagonal matrix $Z_{\Delta}$ is given by
\begin{align}\label{Strucure_ofTraceXZ}
\tr( Z_{\Delta} X ) & = \frac{a}{v} +  b v \,,
\end{align}
where
\begin{align}\label{TraceofSolXZ}
 a & = \sum_{k=1}^n  \frac{2 z_{k}}{c_{k}^2} \; , \\
b & = \sum_{k=1}^n  \frac{z_{k} c_{k}^2}{2 \omega_k^2} +
 z_{k} \left( \sum_{j \neq k}^n
 \frac{3 \omega_k^2 c_{j}^2 + \omega_k^2 c_{k}^2 + \omega_j^2 c_j^2 + \omega_j^2 c_k^2}{ ( \omega_k^2 -\omega_j^2 )^2}
  + 2 \frac{\omega_k^2 }{ c_k^2} \left(\sum_{j \neq k}^n  \frac{ c_{j}^2}{ \omega_k^2 -\omega_j^2}\right)^2
  \right) \,, \label{TraceofSolXZ-b}
\end{align}
and $Z_{\Delta}$ is any diagonal matrix, i.e.
\begin{equation} \label{Def-Z_Delta}
Z_{\Delta} = \diag(z_1, \ldots, z_{n}, z_1, \ldots, z_{n})\,.
\end{equation}

From \eqref{Strucure_ofTraceXZ} we get optimal viscosity ``for free''
\begin{align}\label{OptimalVisc}
v_\textrm{opt} & = \sqrt{\frac{a}{b}} \,.
\end{align}

This means that the optimal trace for a damper in a fixed position is
\begin{align}\label{OptimalTraceXZ}
\tr(X_{\rm{opt}}) & = 2 \sqrt{ a b } \,.
\end{align}

In what follows we will show that the optimal trace \eqref{OptimalTraceXZ}
for position defined as in \eqref{DefposGeneral}, ($\mathbf{p}=k/n$) is asymptotically a linear function of the dimension $n$, that is
\begin{align*}
\tr(X_{\rm{opt}}) = \tr(X_{\rm{opt}}(\mathbf{p}, n)) \sim \mathcal{O}(n)\,.
\end{align*}

More precisely, we will show that for the optimal trace \eqref{OptimalTraceXZ} at position $\mathbf{p}$, corresponding optimal coefficients $a$ and $b$ are linear functions of
the dimension $n$, i.e. we will show that for $n$ large enough ($n \rightarrow \infty$)
\begin{align*}
a(n) \sim \alpha_1 n + \alpha_0 \,,  b(n) \sim \beta_1 n + \beta_0 \,,
\end{align*}
for some functions $ \alpha_0,  \alpha_1$ and $\beta_0, \beta_1$ which does not depend on $n$.

\subsection{Optimal trace for the average total energy }
\label{Opt_trace_aver_Energy}

As we have mentioned above using expression $\eqref{OptimalTraceXZ}$
and the structure of $a$ and $b$ from \eqref{TraceofSolXZ} and \eqref{TraceofSolXZ-b}, respectively, we will prove that for rod or string model for $n$ large enough the optimal trace is  asymptotically
 a linear function of $n$. This means that for $n$ large enough the optimal trace divided by $n$ is a function that does not depend on $n$.

More precisely we will show that
\begin{align*} 
\lim\limits_{n \rightarrow \infty} \frac{a(n)}{n}
 = \alpha_1 \,, \quad
\lim\limits_{n \rightarrow \infty} \frac{b(n)}{n}
 = \beta_1 \,, \quad
\end{align*}
where $\alpha_1$ and $\beta_1$ does not depend on $n$, thus
\begin{align*} 
\lim\limits_{n \rightarrow \infty} \left(\tr(X(\mathbf{p}, n ) \right)
& =  \eta(\mathbf{p}, n)  \sim \eta_1 n + \eta_0 \,,
\end{align*}
where  $\eta_1$, and $\eta_0$ are some functions that are independent of $n$.

For the criterion of the average total energy, frequency-cut projection
is defined as in \eqref{DefZ2b}:
\begin{align*}
Z_1= Z_s \oplus Z_s\,, \qquad Z_s =\begin{bmatrix} I_{s} &  \\  & 0_{(n-s)}  \end{bmatrix}\,,
 \end{align*}
which means that all $z_k=1$ from \eqref{Def-Z_Delta},
for  $k=1, \ldots, s$, while $z_k=0$, for $k=s+1, \ldots, n$.

Recall that $ \omega_j$ and $c_j(\mathbf{p}, n)$ for string/rod model
are given as
\begin{align*}
 \omega_j(n) = 2 \sin{\frac{j \pi}{2 (n+1)}}\,,
  \end{align*}
and
\begin{align*}
c_j(\mathbf{p}, n) = \sqrt{\frac{2}{n+1}} \sin{\frac{j \mathbf{p} n \pi }{n+1}} \,.
\end{align*}
If we insert these into $a$ and $b$ from \eqref{TraceofSolXZ} and \eqref{TraceofSolXZ-b}, respectively,
we get
\begin{align}\label{Defabfor string}
 a(n) & = \sum_{k=1}^s  \frac{(n+1)}{\sin ^2\left(\frac{\pi  k n \mathbf{p}}{n+1}\right)} \, , \\
 b(n)& = b_1(n) + b_2(n) + b_3(n) \, , \\
b_1(n) & = \sum_{k=1}^s \frac{\csc ^2\left(\frac{\pi  k}{2(n+1)}\right) \sin ^2\left(\frac{\pi  k n \mathbf{p}}{n+1}\right)}{4 (n+1) }  \,, \\
b_2(n) & = \sum_{k=1}^s    \sum_{j \neq k}^n
 \frac{ \omega_k^2 ( 3 c_{j}^2 + c_{k}^2 ) + \omega_j^2 ( c_j^2 + c_k^2) }{ ( \omega_k^2 -\omega_j^2 )^2}
   \,, \\
 b_3(n) & = \sum_{k=1}^s \sum_{j \neq k}^n
   2 \frac{\omega_k^2 }{ c_k^2} \left(\sum_{j \neq k}^n  \frac{ c_{j}^2}{ \omega_k^2 -\omega_j^2}\right)^2
   \,,
\end{align}
 where, after additional simplification
 \begin{align}\label{b2andb3}
 b_2(n) & = \sum_{k=1}^s    \sum_{j \neq k}^n
 \frac{2 \sin ^2\left(\frac{\pi  j}{2 n+2}\right)
 \left(\sin^2\left(\frac{\pi  j n  \mathbf{p} }{n+1}\right)+\sin^2\left(\frac{\pi  k n  \mathbf{p} }{n+1}\right)\right)}{(n+1) \left(\cos \left(\frac{\pi  j}{n+1}\right)-\cos \left(\frac{\pi  k}{n+1}\right)\right)^2}
 \nonumber    \\
   &  + \frac{2 \sin^2\left(\frac{\pi  k}{2 n+2}\right) \left(3 \sin^2\left(\frac{\pi  j n \mathbf{p} }{n+1}\right)+2 \sin^2\left(\frac{\pi  k n  \mathbf{p} }{n+1}\right)\right)}{(n+1) \left(\cos \left(\frac{\pi  j}{n+1}\right)-\cos \left(\frac{\pi  k}{n+1}\right)\right)^2} \\
 b_3(n) & = \sum_{k=1}^s
   4 \cdot (n+1) \sin ^2\left(\frac{\pi  k}{2 n+2}\right) \csc ^2\left(\frac{\pi  k n  \mathbf{p} }{n+1}\right) \nonumber \\
   & \left(\sum_{j \neq k}^n \frac{\sin ^2\left(\frac{\pi  j n  \mathbf{p} }{n+1}\right)}{(n+1) \left(\cos \left(\frac{\pi  j}{n+1}\right)-\cos \left(\frac{\pi  k}{n+1}\right)\right)}  \right)^2
   \,, %
\end{align}

Thus, we proceed with the calculation  of the limits of
functions $a(n)/n$ and  $b(n)/n$.

For the first limit, note that
\begin{align*}
\frac{a(n)}{n} & =  \sum_{k=1}^s  \frac{(n+1)}{n \sin^2\left(\frac{\pi  k n \mathbf{p}}{n+1}\right)} \,,
\end{align*}
which implies that
\begin{align*} 
\lim\limits_{n \rightarrow \infty} \frac{a(n)}{ n} & =
\sum_{k=1}^s \csc ^2(\pi  k \mathbf{p}) \,.
\end{align*}
This shows, that $a(n)$ has oblique asymptote with slope
\begin{align} \label{Limit_pat_der_a}
\alpha_1 = \sum_{k=1}^s \csc ^2(\pi  k \mathbf{p}) \,.
\end{align}

Further, note that
\begin{align*}
\frac{b(n)}{ n}& = \frac{ b_1(n)}{ n} + \frac{ b_2(n)}{ n} + \frac{ b_3(n)}{ n}\,.
\end{align*}
The ratio $\displaystyle{\frac{b_1(n)}{n}}$ is given by
\begin{align*}
\frac{b_1(n)}{n} & =  \sum_{k=1}^s \frac{\csc ^2\left(\frac{\pi  k}{2(n+1)}\right) \sin ^2\left(\frac{\pi  k n \mathbf{p}}{n+1}\right)}{4 n (n+1) } \,,
\end{align*}
thus, the limit of $\displaystyle{\frac{b_1(n)}{n}}$ is
\begin{align} \label{Limit_pat_der_b1}
\lim\limits_{n \rightarrow \infty} \frac{ b_1(n)}{ n} & =
\sum_{k=1}^s \frac{2 \sin ^2(\pi  k \mathbf{p})}{\pi ^2 k^2}\,.
\end{align}
Further,
\begin{align*}
\frac{b_2(n)}{n} & =  \sum_{k=1}^s    \sum_{j \neq k}^n
\Gamma(j,k,\mathbf{p}) \,,
\end{align*}
where
\begin{align*}
\Gamma(j,k,\mathbf{p}) & = 
 \frac{2 \sin ^2\left(\frac{\pi  j}{2 n+2}\right)
 \left(\sin^2\left(\frac{\pi  j n  \mathbf{p} }{n+1}\right)+\sin^2\left(\frac{\pi  k n  \mathbf{p} }{n+1}\right)\right)}{n (n+1) \left(\cos \left(\frac{\pi  j}{n+1}\right)-\cos \left(\frac{\pi  k}{n+1}\right)\right)^2}
 \nonumber    \\
   &  + \frac{\sin^2\left(\frac{\pi  k}{2 n+2}\right) \left(3 \sin^2\left(\frac{\pi  j n \mathbf{p} }{n+1}\right)+2 \sin^2\left(\frac{\pi  k n  \mathbf{p} }{n+1}\right)\right)}{n (n+1) \left(\cos \left(\frac{\pi  j}{n+1}\right)-\cos \left(\frac{\pi  k}{n+1}\right)\right)^2}\,.
\end{align*}

Using simple properties of the limit of trigonometric functions and some trigonometric identities we can see that $\Gamma(j,k,\mathbf{p})$ has the
upper bound
\begin{align*}
0 < \Gamma(j,k,\mathbf{p}) & \leq
 \frac{2 \left(2(j \pi)^2 + 4 (k \pi)^2  \right) }{n (n+1)(2 n+2)^2 \left(\sin \left(\frac{(j+k) \pi}{2(n+1)}\right) \sin \left(\frac{(j-k)\pi }{2(n+1)}\right)\right)^2}\,,
\end{align*}
which further implies
\begin{align*}
0 < \Gamma(j,k,\mathbf{p}) & \leq  \frac{4 (n+1) \left(j^2+2 k^2\right)}{\pi ^2 n (j-k)^2 (j+k)^2}\,.
\end{align*}
For a fixed $k \in \mathds{N}$, the sum
\begin{align} \label{defSk}
S(k)& = \frac{4}{\pi^2}\,\sum_{j \neq k}^\infty
 \frac{(n+1) \left(j^2+2 k^2\right)}{n (j-k)^2 (j+k)^2}\,
\end{align}
is convergent. Indeed note that  for fixed $k$
\begin{align*}
\sum_{j \neq k}^\infty
 \frac{(n+1) \left(j^2+2 k^2\right)}{n (j-k)^2 (j+k)^2}
 & =
\sum_{j \neq k}^\infty \frac{\left(j^2+2 k^2\right)}{(j^2-k^2)^2} \\
& = \sum_{j \neq k}^\infty \frac{1}{(j^2-k^2)} +
3 \sum_{j \neq k}^\infty \frac{k^2}{(j^2-k^2)^2}  \\
& = \sum_{j = 1}^{k-1} \frac{1}{(j^2-k^2)} +
\sum_{j = k+1}^\infty \frac{1}{(j^2-k^2)} +
3 \sum_{j = 1}^{k-1} \frac{k^2}{(j^2-k^2)^2}    +
3 \sum_{j = k+1}^\infty \frac{k^2}{(j^2-k^2)^2}    \,.
\end{align*}
Since, for $j>k$ we have
\begin{align*}
\frac{k^2}{(j^2-k^2)^2} \leq \frac{k^2}{j^2-k^2} \,,
\end{align*}
using
\begin{align*}
\sum_{j = k+1}^{n} \frac{1}{ j^2 - k^2} & = \frac{1}{2 k} H_{2k}  \,,
\end{align*}
where $H_{2k}$ are generalized harmonic numbers ( see \cite[Theorem 3.4]{BBCLGM2007})
we can conclude that the sum $S(k)$ from \eqref{defSk} is convergent.

For example (for $n\rightarrow \infty$)
\begin{align*}
S(1)=0.468064 \,, S(2)=0.867021 \,, S(3)= 0.940901 \,, S(4)=0.96676 \,, \ldots
\end{align*}
\begin{align*}
S(10)=0.994687 \,, \ldots, S(100)= 0.999953 \,, \ldots, S(n)= 1\,.
\end{align*}
Thus, the limit of $\displaystyle{\frac{b_2(n)}{n}}$ is bounded by
\begin{align} \label{Limit_pat_der_b2}
\lim\limits_{n \rightarrow \infty} \frac{b_2(n)}{n} & \leq
 s \cdot \lim\limits_{n \rightarrow \infty}   \sum_{j \neq k}^n
\frac{4 (n+1) \left(j^2+2 k^2\right)}{\pi ^2 n (j-k)^2 (j+k)^2} \leq s\,.
\end{align}
Note that the above limit depends only on $s$  and it is independent on $\mathbf{p}$
(position of the damper).

For the last limit, consider
\begin{align*}
\frac{b_3(n)}{n} & = \sum_{k=1}^s
   \frac{4}{n(n+1)} \sin ^2\left(\frac{\pi  k}{2 n+2}\right) \csc ^2\left(\frac{\pi  k n  \mathbf{p} }{n+1}\right) \nonumber \\
   & \left(\sum_{j \neq k}^n \frac{\sin ^2\left(\frac{\pi  j n  \mathbf{p} }{n+1}\right)}{\left(\cos \left(\frac{\pi  j}{n+1}\right)-\cos \left(\frac{\pi  k}{n+1}\right)\right)}  \right)^2    \,.
\end{align*}
Note that
\begin{align*}
\lim\limits_{n \rightarrow \infty} \frac{b_3(n)}{n} & = \lim\limits_{n \rightarrow \infty} \sum_{k=1}^s
   \frac{\left(\pi ^2 k^2\right) \csc ^2\left(\frac{\pi  k n \mathbf{p}}{n+1}\right)}{n (n+1)}
    \left(\sum_{j \neq k}^n \frac{2 \sin^2\left(\frac{\pi  j n \mathbf{p}}{n+1}\right)}{\pi^2 \left(k^2-j^2\right)}  \right)^2   \\
   & = \sum_{k=1}^s \csc^2(\pi  k \mathbf{p}) \left(\sum _{j \neq k}^\infty
   \frac{2 k \sin^2(\pi  j \mathbf{p})}{\pi  \left(k^2-j^2\right)}\right)^2 \,.
   \end{align*}
Note, for  fixed $k$
\begin{align*}
  \left|
   \frac{\sin^2(\pi  j \mathbf{p})}{\left(k^2-j^2\right)} \right| & \leq
   \left| \frac{1}{ j^2 - k^2}  \right| \,,
   \end{align*}
which using
\begin{align*}
     \sum_{j = k+1}^\infty \frac{1}{ j^2 - k^2}  & = \frac{1}{2 k} H_{2k}\,,
   \end{align*}
where $H_{2k}$ are generalized harmonic number,
implies that the sums
\begin{align*}
\sum_{k=1}^s \csc^2(\pi  k \mathbf{p}) \left(\sum _{j=1}^\infty
\frac{2 k \sin^2(\pi  j \mathbf{p})}{\pi  \left(k^2-j^2\right)}\right)^2\,,
    \end{align*}
is convergent. We have
\begin{align} \label{Limit_pat_der_b3}
\lim\limits_{n \rightarrow \infty} \frac{b_3(n)}{n} & \leq
 \sum_{k=1}^s
 \csc^2(\pi  k \mathbf{p}) 
 T(k,\mathbf{p})\,,
\end{align}
where
\begin{align} \label{DefTodn}
T(k,\mathbf{p})&=\sum_{j =1}^k \frac{1}{k^2-j^2} + \frac{1}{2 k} H_{2k} \,.
\end{align}
Note that the limit in \eqref{Limit_pat_der_b3} depends only on $s$ and $\mathbf{p}$.

We have showed that sums from \eqref{Limit_pat_der_b1}, \eqref{Limit_pat_der_b2} and \eqref{Limit_pat_der_b3} converge, respectively, which implies
\begin{align} \label{LimitBovern}
\lim\limits_{n \rightarrow \infty} \frac{b(n)}{n}
 = \beta_1(s,\mathbf{p}) \,, \quad
\end{align}
and from \eqref{Limit_pat_der_a} follows
\begin{align} \label{LimitAovern}
\lim\limits_{n \rightarrow \infty} \frac{a(n)}{n}
 = \alpha_1(s,\mathbf{p}) \,. \quad
\end{align}

To summarize, if one is interested in the calculation of the
the best damping, that is the best position and corresponding optimal viscosity using the average total energy criterion,  is equivalent to the  minimization
\begin{align*}
 \tr(X(\mathbf{p} , v , s)) \rightarrow \min\,,
\end{align*}
where $X(\mathbf{p} , v , s)$ is the solution of the {Lyapunov} equation
\begin{equation}  \label{LyapEqEnergyCrit}
A X(\mathbf{p} , v , s) + X(\mathbf{p} , v , s) A^T = -Z,
\end{equation}
where $A$ is defined  in \eqref{Def_MatrixANum}
and  $v>0$  is the viscosity parameter which has to be optimized
for a damper $\mathbf{c}= c(\mathbf{p})$ in given position
$\mathbf{p} = k/n$.

If one tries to calm down the first $s$ undamped eigenfrequencies $\omega_1 < \ldots < \omega_s$, the matrix $Z$ is defined as
\begin{equation*}
Z = Z_s \oplus Z_s \,, Z_s = \begin{bmatrix} I_s &  0\\ 0 & 0_{n-s} \end{bmatrix}\,.
\end{equation*}
Thus, let
\begin{align} \label{OptViscEnergyCrit}
v_{\rm {opt}} = \argmin_{v} \tr(X(\mathbf{p} , v , s))\,,
\end{align}
be the optimal viscosity obtained for the position $\mathbf{p}$ and given $s$.

Based on the preceding calculation, we can state the following theorem.

\begin{theorem} \label{Theorem1}
Let $X(\mathbf{p} , v_{\rm {opt}} , s)$ be the solution of the Lyapunov equation \eqref{LyapEqEnergyCrit}, with optimal viscosity
$v_{\rm {opt}}$ as in \eqref{OptViscEnergyCrit}.  Then it holds
\begin{align} \label{LimitofOptTraceEnergyCrit}
\lim\limits_{n \rightarrow \infty}
\frac{\tr(X(\mathbf{p}, v_{\rm {opt}},s))}{n} & =  \eta_1(\mathbf{p}, s) \,,
\end{align}
where  $\eta_1(\mathbf{p}, s)$, is a function that is independent of $n$.
\end{theorem}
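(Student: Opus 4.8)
\quad The plan is to reduce the statement to the two asymptotics already derived above, via the closed form \eqref{OptimalTraceXZ}. By \eqref{OptimalVisc} the optimal viscosity is $v_{\rm opt}=\sqrt{a/b}$, so the optimal trace equals $\tr(X(\mathbf{p},v_{\rm opt},s))=2\sqrt{a(n)\,b(n)}$, with $a(n)$ and $b(n)=b_1(n)+b_2(n)+b_3(n)$ given explicitly by \eqref{Defabfor string}--\eqref{b2andb3}. Hence
\[
\frac{\tr(X(\mathbf{p},v_{\rm opt},s))}{n}=2\sqrt{\frac{a(n)}{n}\cdot\frac{b(n)}{n}},
\]
and it suffices to prove that $a(n)/n$ and $b(n)/n$ converge, as $n\to\infty$, to finite numbers $\alpha_1(s,\mathbf{p})$ and $\beta_1(s,\mathbf{p})$ independent of $n$; then continuity of $t\mapsto\sqrt t$ yields \eqref{LimitofOptTraceEnergyCrit} with $\eta_1(\mathbf{p},s)=2\sqrt{\alpha_1(s,\mathbf{p})\,\beta_1(s,\mathbf{p})}$.

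I would first dispatch the easy limits. The sum for $a(n)/n$ has only $s$ terms, and since $\tfrac{kn\mathbf{p}}{n+1}\to k\mathbf{p}$ for each $k\le s$, we get $\tfrac{n+1}{n\sin^2(\pi k n\mathbf{p}/(n+1))}\to\csc^2(\pi k\mathbf{p})$, a finite quantity because $\sin(\pi k\mathbf{p})\neq0$ for $k\le s$ (this is precisely the admissibility requirement $c_k\neq0$ on the damper position). This gives $\alpha_1(s,\mathbf{p})=\sum_{k=1}^s\csc^2(\pi k\mathbf{p})$, as in \eqref{Limit_pat_der_a}. The same term-by-term argument applied to $b_1(n)/n$ produces the limit $\sum_{k=1}^s\frac{2\sin^2(\pi k\mathbf{p})}{\pi^2k^2}$ of \eqref{Limit_pat_der_b1}.

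The substantive step -- and the one I expect to be the main obstacle -- is the convergence of $b_2(n)/n$ and $b_3(n)/n$, each a finite sum over $k\in\{1,\dots,s\}$ of an inner sum over $j\neq k$. For fixed $j$ and $k$ the $n$-dependent summand converges: from $\cos\tfrac{\pi j}{n+1}-\cos\tfrac{\pi k}{n+1}=-2\sin\tfrac{(j+k)\pi}{2(n+1)}\sin\tfrac{(j-k)\pi}{2(n+1)}\sim-\tfrac{(j^2-k^2)\pi^2}{2(n+1)^2}$ and $\sin^2\tfrac{\pi k}{2(n+1)}\sim\tfrac{\pi^2k^2}{4(n+1)^2}$ one checks that $\Gamma(j,k,\mathbf{p})$ tends to an explicit expression of order $1/(j^2-k^2)^2$. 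To move $\lim_n$ inside the inner sum I would invoke dominated convergence for series: the bound already obtained, $0<\Gamma(j,k,\mathbf{p})\le\frac{4(n+1)(j^2+2k^2)}{\pi^2 n(j-k)^2(j+k)^2}\le C\,\frac{j^2+2k^2}{(j-k)^2(j+k)^2}$, holds uniformly in $n$, and the majorant series $\sum_{j\neq k}\frac{j^2+2k^2}{(j-k)^2(j+k)^2}$ converges -- it is essentially $S(k)$ of \eqref{defSk}, whose convergence was established above using $\sum_{j=k+1}^\infty\frac1{j^2-k^2}=\frac1{2k}H_{2k}$. Hence each inner sum converges to its $j=\infty$ value, and, only $s$ values of $k$ being present, $b_2(n)/n$ converges; an analogous domination for $b_3$, using $\bigl|\frac{\sin^2(\pi j\mathbf{p})}{k^2-j^2}\bigr|\le\frac1{|j^2-k^2|}$ together with the same harmonic-number identity, shows $b_3(n)/n$ converges. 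In effect this upgrades the bounds \eqref{Limit_pat_der_b2}--\eqref{Limit_pat_der_b3} to the genuine limit \eqref{LimitBovern}, and summing the three pieces defines $\beta_1(s,\mathbf{p})=\lim_n b(n)/n$, independent of $n$.

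Finally, assembling the parts,
\[
\frac{\tr(X(\mathbf{p},v_{\rm opt},s))}{n}=2\sqrt{\frac{a(n)}{n}\cdot\frac{b(n)}{n}}\ \longrightarrow\ 2\sqrt{\alpha_1(s,\mathbf{p})\,\beta_1(s,\mathbf{p})}=:\eta_1(\mathbf{p},s)\qquad(n\to\infty),
\]
which is independent of $n$, giving the claim. Beyond the domination bookkeeping above, the only delicate point is that every quantity stays finite and bounded away from zero; this holds precisely because the damper is placed so that $c_k=c_k(\mathbf{p},n)\neq0$ for all modes involved, which keeps the $\csc^2$ factors in $a(n)$ and $b_3(n)$ under control.
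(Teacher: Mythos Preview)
Your proposal is correct and follows essentially the same route as the paper: rewrite the optimal trace via \eqref{OptimalTraceXZ} as $2\sqrt{a(n)b(n)}$, reduce to the convergence of $a(n)/n$ and $b(n)/n$ established in \eqref{LimitAovern}--\eqref{LimitBovern}, and conclude by continuity of the square root. If anything, your explicit appeal to dominated convergence for the inner sums in $b_2$ and $b_3$ is a shade more careful than the paper, which only records upper bounds \eqref{Limit_pat_der_b2}--\eqref{Limit_pat_der_b3} before asserting the limits exist.
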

\begin{proof} Using \eqref{OptimalTraceXZ}, one can see that
\begin{align*}
\frac{\tr(X(\mathbf{p}, v_{\rm {opt}},s))}{n}  & = 2 \sqrt{ \frac{a(n)}{n} \frac{b(n)}{n} } \,.
\end{align*}
Now, from \eqref{LimitAovern}  and \eqref{LimitBovern} follows
\begin{align*}
\lim\limits_{n \rightarrow \infty} \frac{\tr(X(\mathbf{p}, v_{\rm {opt}},s))}{n}  & = 2 \sqrt{ \alpha_1(s,\mathbf{p}) \beta_1(s,\mathbf{p}) } \doteq \eta_1(\mathbf{p}, s)\,,
\end{align*}
which completes the proof.
\end{proof}

The main benefit from the result of Theorem \ref{Theorem1} is
that, for a given set of dominant eigenfrequencies which we try to calm,
that is for a given matrix $Z$, one can calculate the best position
with the corresponding optimal viscosity for some modest dimension $n$,
which then holds for general (large) $n$.  Note, that the optimal position $\mathbf{p}$ will change if we change the number of dominant eigenfrequencies $s$ which we try to calm.

All this will be illustrated in the section with numerical examples.

\subsection{Optimal trace for the total average displacement }
\label{Opt_trace_aver_Displac}

To minimize the total average displacement within the rod or string model, it is necessary to minimize the trace of the solution to the Lyapunov equation. This involves using the same system matrix but with a distinct right-hand side.

Recall, from \eqref{defhatK} and \eqref{Zfordisplacmt} follows that the right-hand side of the Lyapunov equation contains
\begin{align*}
   \widehat{K}^{-1} =  \Omega^{-1} \Phi^{T} \Phi \Omega^{-1} = \Omega^{-2}\,.
\end{align*}
Here we have used that $\Phi$ is an orthogonal matrix.

Thus, the projection matrix $Z_{\Delta}$ from \eqref{Strucure_ofTraceXZ} and
\eqref{Zfordisplacmt} is diagonal given as
\begin{align*}
Z_{\Delta} = \diag(\Omega_s^{-2}, 0_{n-s}) \oplus 0_n \,,
\end{align*}
where
\begin{align*}
\Omega_s= \diag(\omega_1, \ldots, \omega_s) \,.
\end{align*}
This configuration enables us (once again) a usage of $a$ and $b$  from  \eqref{TraceofSolXZ} and \eqref{OptimalTraceXZ} for determining the optimal trace.

Note, if we are interested in calming down $s$ dominant eigenfrequencies
$\omega_1, \ldots, \omega_s$, the non-zero diagonal entries of the matrix
are given as
\begin{align*}
z_k = \frac{1}{\omega_k^2}\,, \qquad k=1, \ldots, s\,.
\end{align*}
Since, for the string/rod model $\omega_k$ is given as
\begin{align*}
\omega_k = 2 \sin\frac{\pi k}{2 n+2} \,,
\end{align*}
we see that $\omega_k^2$ has the same rate of
convergence as $\displaystyle{\frac{1}{n^2}}$ i.e.
\begin{align} \label{limitomega2n2}
\lim\limits_{n \rightarrow \infty} n^2 \omega_k^2 = k^2 \pi^2 \,.
\end{align}

Thus, in the case of the minimization of the total average displacement,
for $n$ large enough the optimal trace function divided
by $n^3$ is a function which does not depend on $n$.

More precisely, from \eqref{OptimalTraceXZ} follows
\begin{align} \label{OptimalTraceXZDispl}
\tr(X(\mathbf{p}, v_{\rm {opt}},s)) & = 2 \sqrt{ a_K(n) b_K(n) } \,,
\end{align}
where
\begin{align} \label{TraceDisplacement} 
 a_K(n) & = \sum_{k=1}^s  \frac{2}{\omega_k^2 c_{k}^2} \; , \\
b_K(n) & = \sum_{k=1}^s  \frac{c_{k}^2}{2 \omega_k^4}   \\
 & + \frac{1}{\omega_k^2} \left( \sum_{j \neq k}^n
 \frac{2 \omega_k^2 c_{j}^2 + \omega_k^2 c_{k}^2
  }{ ( \omega_k^2 -\omega_j^2 )^2}
  + \frac{\omega_k^2 }{ c_k^2} \left(\sum_{j \neq k}^n  \frac{ c_{j}^2}{ \omega_k^2 -\omega_j^2}\right)^2
  \right) \,. \nonumber
\end{align}
Now, using \eqref{Limit_pat_der_a} and \eqref{limitomega2n2} it is easy to show
\begin{align} \label{LimitaK}
\lim\limits_{n \rightarrow \infty} \frac{a_K(n)}{n^3} & =
\sum_{k=1}^s   \frac{1}{k^2 \pi^2}  \csc ^2(\pi  k \mathbf{p}) \,.
\end{align}
Further let $b_{K1}$, $b_{K2}$ and $b_{K3}$ correspond to
$b_{1}$, $b_{2}$ and $b_{3}$, from \eqref{Defabfor string},
such that $b_{Ki}$ is obtained from $b_{i}$ by multiplying its elements
from the first sum with $1/\omega_k^2$, $i=1, 2, 3$.

Now again using \eqref{limitomega2n2} and \eqref{Limit_pat_der_b1} it is easy to get
\begin{align} \label{Limitb1K}
\lim\limits_{n \rightarrow \infty} \frac{ b_{K1}(n)}{n^3} & =
\sum_{k=1}^s \frac{2 \sin ^2(\pi  k \mathbf{p})}{\pi^4 k^4}\,.
\end{align}
Similarly, using \eqref{Limit_pat_der_a} and \eqref{Limit_pat_der_b2}
we get
\begin{align} \label{Limitb2K}
\lim\limits_{n \rightarrow \infty} \frac{b_{K2}(n)}{n^3} & \leq
\sum_{k=1}^s \frac{1}{k^2 \pi^2} \,,
\end{align}
and using \eqref{limitomega2n2} and \eqref{Limit_pat_der_b3},
we have
\begin{align} \label{Limitb3K}
\lim\limits_{n \rightarrow \infty} \frac{b_{K3}(n)}{n^3} & \leq
 \sum_{k=1}^s  \frac{1}{k^2 \pi^2} \csc^2(\pi  k \mathbf{p}) T(k,\mathbf{p})\,,
\end{align}
where $T(k,\mathbf{p})$ is defined as in \eqref{DefTodn}, i.e.
\begin{align*}
T(k,\mathbf{p})&=\sum_{j =1}^k \frac{1}{k^2-j^2} + \frac{1}{2 k} H_{2k} \,.
\end{align*}

Now similarly as in the previous section from \eqref{LimitaK},
and  \eqref{Limitb1K}, \eqref{Limitb2K}  and \eqref{Limitb3K} we
know that limits exist, thus we can write
\begin{align} \label{LimitAKovern3}
\lim\limits_{n \rightarrow \infty} \frac{a(n)}{n^3}
 = \gamma_1(s,\mathbf{p}) \,,
\end{align}
and
\begin{align} \label{LimitBKovern3}
\lim\limits_{n \rightarrow \infty} \frac{b_K(n)}{n^3}
 = \gamma_2(s,\mathbf{p}) \,. \quad
\end{align}

To summarize, for the total average displacement, if one tries to calm down the first $s$ undamped eigenfrequencies $\omega_1 < \ldots < \omega_s$, the matrix $Z$ is defined as
\begin{align} \label{ZDicplac}
Z = \diag(\Omega_s^{-2}, 0_{n-s}) \oplus 0_n \,,
\end{align}
where
\begin{align*}
\Omega_s= \diag(\omega_1, \ldots, \omega_s) \,.
\end{align*}
Thus, let
\begin{align} \label{OptViscEnergyCritDis}
v_{\rm {opt}} = \argmin_{v} \tr(X(\mathbf{p} , v , s))\,,
\end{align}
be the optimal viscosity obtained for the position $\mathbf{p}$ and given $s$ using the total average displacement criterion.

We can state the following theorem.

\begin{theorem} \label{Theorem2}
Let $X(\mathbf{p} , v_{\rm {opt}} , s)$ be the solution of the Lyapunov equation \eqref{LyapEqEnergyCrit}, with $Z$ from \eqref{ZDicplac} and optimal viscosity $v_{\rm {opt}}$ as in \eqref{OptViscEnergyCrit}.  Then it holds
\begin{align} \label{LimitofOptTraceDisplacCrit}
\lim\limits_{n \rightarrow \infty}
\frac{\tr(X(\mathbf{p}, v_{\rm {opt}},s))}{n^3} & =  \xi(\mathbf{p}, s) \,,
\end{align}
where  $\xi(\mathbf{p}, s)$, is a function that is independent of $n$.
\end{theorem}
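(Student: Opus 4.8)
The plan is to reduce Theorem \ref{Theorem2} to the two convergence statements \eqref{LimitAKovern3} and \eqref{LimitBKovern3} that have just been established, via the explicit optimal-trace formula. For the displacement right-hand side \eqref{ZDicplac} the weight is diagonal, $Z_\Delta=\diag(\omega_1^{-2},\ldots,\omega_s^{-2},0,\ldots,0)\oplus 0_n$, so \eqref{Strucure_ofTraceXZ} applies with $z_k=1/\omega_k^2$ for $k\le s$ and $z_k=0$ otherwise; this is exactly the data producing $a_K(n)$ and $b_K(n)$ in \eqref{TraceDisplacement}. Hence $v_{\rm opt}=\sqrt{a_K(n)/b_K(n)}$ from \eqref{OptimalVisc}, and \eqref{OptimalTraceXZDispl} gives
\[
\frac{\tr\bigl(X(\mathbf p,v_{\rm opt},s)\bigr)}{n^3}=2\sqrt{\frac{a_K(n)}{n^3}\cdot\frac{b_K(n)}{n^3}}\,.
\]

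First I would invoke \eqref{LimitAKovern3}: $a_K(n)/n^3\to\gamma_1(s,\mathbf p)=\sum_{k=1}^s\csc^2(\pi k\mathbf p)/(k^2\pi^2)$, a finite and strictly positive number provided $\sin(\pi k\mathbf p)\neq 0$ for $k=1,\ldots,s$. Next I would invoke \eqref{LimitBKovern3}: $b_K(n)/n^3\to\gamma_2(s,\mathbf p)$, which is finite and positive because $b_{K1}/n^3$, $b_{K2}/n^3$, $b_{K3}/n^3$ have the finite limits/bounds \eqref{Limitb1K}, \eqref{Limitb2K}, \eqref{Limitb3K}. Since $t\mapsto\sqrt t$ and multiplication are continuous on $(0,\infty)$, passing to the limit in the displayed identity yields
\[
\lim_{n\to\infty}\frac{\tr\bigl(X(\mathbf p,v_{\rm opt},s)\bigr)}{n^3}=2\sqrt{\gamma_1(s,\mathbf p)\,\gamma_2(s,\mathbf p)}\doteq\xi(\mathbf p,s)\,,
\]
which depends only on $\mathbf p$ and $s$, as claimed.

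The one point that needs care — and it is handled in the preparatory estimates \eqref{limitomega2n2}--\eqref{Limitb3K} — is that \eqref{LimitAKovern3} and \eqref{LimitBKovern3} are genuine limits, not merely $\limsup$ bounds, which requires interchanging $\lim_{n\to\infty}$ with the sum over $j$ in $b_{K2}$ and $b_{K3}$. For this I would note that, after multiplying by $1/\omega_k^2$ and rescaling by $n^3$, each summand converges pointwise in $(j,k)$ (using $n^2\omega_k^2\to k^2\pi^2$ and the trigonometric limits of Section \ref{Opt_trace_aver_Energy}) while being dominated, uniformly in $n$ for $n$ large, by a summable majorant of the shape $\frac{1}{k^2\pi^2}\cdot\frac{4(j^2+2k^2)}{\pi^2(j^2-k^2)^2}$, whose $j$-series is controlled by the generalized harmonic numbers $H_{2k}$ exactly as in \eqref{defSk} and \eqref{DefTodn}. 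Dominated convergence then legitimizes the interchange, so $b_K(n)/n^3$ converges; $a_K(n)/n^3$ is a finite sum over $k\le s$ and converges directly from \eqref{limitomega2n2}. I expect this uniform-domination bookkeeping for the double sums to be the only real obstacle, and it is entirely parallel to the energy-criterion argument that precedes Theorem \ref{Theorem1}.
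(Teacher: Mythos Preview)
Your proposal is correct and follows essentially the same route as the paper: write $\tr(X(\mathbf p,v_{\rm opt},s))/n^3 = 2\sqrt{(a_K(n)/n^3)(b_K(n)/n^3)}$ via \eqref{OptimalTraceXZDispl}, then invoke \eqref{LimitAKovern3} and \eqref{LimitBKovern3} and pass to the limit by continuity of the square root. Your extra paragraph on dominated convergence for the $j$-sums in $b_{K2}$ and $b_{K3}$ is in fact more careful than the paper itself, which simply asserts that the limits exist once the bounds \eqref{Limitb1K}--\eqref{Limitb3K} are in hand.
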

\begin{proof} Using \eqref{OptimalTraceXZDispl}, one can see that
\begin{align*}
\frac{\tr(X(\mathbf{p}, v_{\rm {opt}},s))}{n^3}  & = 2 \sqrt{ \frac{a_K(n)}{n^3} \frac{b_K(n)}{n^3} } \,.
\end{align*}
Now, from \eqref{LimitAKovern3}  and \eqref{LimitBKovern3} follows
\begin{align*}
\lim\limits_{n \rightarrow \infty} \frac{\tr(X(\mathbf{p}, v_{\rm {opt}},s))}{n^3}  & = 2 \sqrt{ \gamma_1(s,\mathbf{p}) \gamma_2(s,\mathbf{p}) } \doteq \xi(\mathbf{p}, s)\,,
\end{align*}
which completes the proof.
\end{proof}

%
%
%
%

Typically, the optimal position achieved through minimizing the total average energy may not align with the optimal position attained through minimizing the total average displacement. However, there are instances where they can be closely proximate or even identical. This will be illustrated in the next section with numerical examples.

The main benefit of Theorem \ref{Theorem2} is similar to the benefit of
Theorem \ref{Theorem1}, that is for a given set of dominant eigenfrequencies which we try to calm,
that is for a given matrix $Z$, one can calculate the best position
with the corresponding optimal viscosity for some modest dimension $n$,
which then holds for general (large) $n$.  Note, that the optimal position $\mathbf{p}$ will change if we change the number of dominant eigenfrequencies $s$ which we try to calm.

\begin{remark}
It is important to note that in the case of string or rod oscillations,
the eigenfrequencies and corresponding eigenvectors do not depend on the dimension $n$.

For the string, which is explicitly diagonalized with the undamped frequencies
\[
\omega_k = k \pi
\]
the orthonormal eigenvectors are given by
\[
u_k(x) = \frac{1}{\sqrt{2}} \sin(k \pi x).
\]

For the rod, the eigenvectors are also
\[
u_k(x) = \frac{1}{\sqrt{2}} \sin(k \pi x),
\]
but the eigenfrequencies are
\[
\omega_k = k \pi \sqrt{k^2 \pi^2 a_0 + k_0}.
\]

Since the summations in equations~\eqref{TraceofSolXZ} and~\eqref{TraceofSolXZ-b} run from $k = 1$ to $k = s$ for a fixed $s$,
the quantities $a$ and $b$ remain equal for any $n > s$.
This is due to the fact that the first $s$ eigenpairs are independent of the total dimension $n$.
\end{remark}

\begin{remark}

Regarding the relationship between the discretization size $s$ and the accuracy of the numerical solution, a classical engineering example presented in Reddy \cite{reddy2005} demonstrates that for linear finite elements applied to the one-dimensional wave equation with fixed ends, the $L^2$-norm error of the solution converges with order $\mathcal{O}(h^2)$, where $h = 1/n$ is the uniform mesh size. This result is based on the comparison between the analytical eigenmode solution and the finite element approximation using a consistent mass matrix.

Therefore, if we aim for an accuracy of approximately $\varepsilon = 10^{-8}$, it is sufficient to choose the number of elements such that $n > 10000$, under the assumption that $s < n$. This ensures that the number of retained modes remains below the spatial resolution limit imposed by the mesh, maintaining both stability and accuracy of the FEM approximation.
\end{remark}

\section{Numerical illustration of the main results \label{sec_numerical_example}}

As an illustration of the presented results, we will investigate the mechanical system from section \ref{sec_optimal_positions} where all masses and stiffness are equal to $1$.

\begin{example} 
Consider the mechanical system from Figure~\ref{fig1} with all $m_i=k_i=1 = k_{n+1}$, $i=1, \ldots, n$. This means that we  {are} looking for the best position for the damper with optimal viscosity $v$ for the mechanical system described by equation~\eqref{MnSys} where
\begin{align*}
 M=  I_{n} \,,  \qquad  K = \begin{bmatrix}
  2 & -1 &  &  &  \\
  -1    & 2& -1 &  &  \\
          & \ddots   & \ddots & \ddots &  \\
          &      & -1 & 2 & -1   \\
          &      &  & -1 & 2 \end{bmatrix} \,.
\end{align*}
\end{example}

We consider the  optimal placing of just one damper, for both criteria.
In the first part of this section, we will calculate
the optimal position for dimensions
\[
n\in\{2000, 3000, \ldots, 10000 \}\,.
\]

\textbf{Criterion: the total average energy}

For the case when one tries to calm down the first $s=100$ dominant frequencies,
($\omega_1, \omega_2, \ldots, \omega_s$), the optimal position for
the total average energy is:
\[
\mathbf{p} = 0.495\,.
\]
More precisely, table \ref{Table1} shows all optimal positions for all
dimensions \\ $n\in\{2000, 3000, \ldots, 10000 \}$.
\begin{table}[!h]
\begin{tabular}{|c||c|c|c|c|c|c|c|c|c|}
  \hline
  dim ($n$) & 2000 & 3000 & 4000 & 5000 & 6000 & 7000 & 8000 & 9000 & 10000 \\ \hline
  opt. pos. & 991  & 1487 & 1982 & 2477 & 2973 & 3468 &  3963 &   4458 &  4950 \\
  \hline
\end{tabular}
\caption{Optimal positions for $\omega_1 < \ldots <\omega_{100}$.}
\label{Table1}
\end{table}

On the other hand, for the case when one tries to calm down $s=100$ frequencies starting from $100$, that is $\omega_{101}, \omega_{102}, \ldots, \omega_{200}$, the matrix $Z$ will be defined as
\begin{equation*}
Z= Z_{(100, s)} \oplus Z_{(100, s)}\, \qquad
Z_{(100, s)} =\begin{bmatrix}0_{100} &  &  \\
 & I_{s} &  \\  & &  0_{(n-s-100)}  \end{bmatrix}\,.
\end{equation*}

The optimal position is:
\[
\mathbf{p} = 0.0032\,.
\]
Table \ref{Table2} below shows all optimal positions for all
dimensions \\ $n\in\{2000, 3000, \ldots, 10000 \}$.

\begin{table}[!h]
\begin{tabular}{|c||c|c|c|c|c|c|c|c|c|}
  \hline
  dim ($n$) & 2000 & 3000 & 4000 & 5000 & 6000 & 7000 & 8000 & 9000 & 10000 \\ \hline
  opt. pos. &   6    &  10    & 13    &16    &19    &23     &  26   &  28  & 32    \\
  \hline
\end{tabular}
\caption{Optimal positions for $\omega_{101} < \ldots <\omega_{200}$.}
\label{Table2}
\end{table}

We note that for the first case, the optimal position is close to the center of the ``structure'' (or chain) while in the second case, the optimal position is close to one of the edges.

A similar conclusion (or property) holds if we consider the first $s$ dominant frequencies for $s=20$, that is
$\omega_1, \omega_2, \ldots, \omega_{20}$. The best position is:
\[
\mathbf{p} = 0.4785\,, 
\]
while for the eigenfrequencies starting from $100$-th ($\omega_{101}, \omega_{102}, \ldots, \omega_{120}$) the best position is
\[
\mathbf{p} = 0.0045\,. 
\]

\textbf{Criterion: the total average displacement}

For the case when one tries to calm down the first $s=100$ dominant frequencies,
($\omega_1, \omega_2, \ldots, \omega_s$), using
the total average displacement, the optimal position is:
\[
\mathbf{p} = 0.419 \,, 
\]
or as one can see from table \ref{Table3} for all $n\in\{2000, 3000, \ldots, 10000 \}$
\begin{table}[!h]
\begin{tabular}{|c||c|c|c|c|c|c|c|c|c|}
  \hline
  dim ($n$) & 2000 & 3000 & 4000 & 5000 & 6000 & 7000 & 8000 & 9000 & 10000 \\ \hline
  opt. pos. &   838    &  1260    &  1658   & 1940   &   2481 &  2893        & 3359    & 3779   &  4199   \\
  \hline
\end{tabular}
\caption{Optimal positions for $\omega_1 < \ldots <\omega_{100}$.}
\label{Table3}
\end{table}

On the other hand, for the case when one tries to calm down $s=100$ frequencies starting from $100$, that is ($\omega_{101}, \omega_{102}, \ldots, \omega_{200}$), the optimal position is:
\[
\mathbf{p} = 0.0034\,. 
\]
 Table \ref{Table4} contains all optimal position for all $n\in\{2000, 3000, \ldots, 10000 \}$.

\begin{table}[!h]
\begin{tabular}{|c||c|c|c|c|c|c|c|c|c|}
  \hline
  dim ($n$) & 2000 & 3000 & 4000 & 5000 & 6000 & 7000 & 8000 & 9000 & 10000 \\ \hline
  opt. pos. & 7 & 10 &  14 & 18 & 21 & 24 & 28 &  32 &  34 \\
  \hline
\end{tabular}
\caption{Optimal positions for $\omega_{101} < \ldots <\omega_{200}$.}
\label{Table4}
\end{table}

Note that, here similarly as in the ``energy criterion'' the optimal position, for the first case, is closer to the center of the ``structure'' (or chain)  although positions do not coincide, while for the second  case, optimal position is close to one of the edges.

Similarly as in the  ``energy criterion'', if we consider the first $s=20$ dominant eigenfrequencies ($\omega_1, \omega_2, \ldots, \omega_{20}$), the optimal position is:
\[
\mathbf{p} = 0.3813\,, 
\]
while for the eigenfrequencies starting from $100$-th ($\omega_{101}, \omega_{102}, \ldots, \omega_{120}$) optimal position is
\[
\mathbf{p} = 0.0045\,. 
\]

We can see that for a certain set of dominant eigenfrequencies, each criterion gives a different optimal position. Thus, as we have mentioned in one of the previous sections, the investigation of properties of both criterion and mutual comparison will be one of our future studies.

Just as the illustration for mutual comparison between both criteria, let us consider one more example where we will change the range of the dominant
eigenfrequencies $\omega_{i+1}, \omega_{i+2}, \ldots, \omega_{i+s}$.

Thus, the tables below contain the optimal position for both criteria
for a different set of the dominant eigenfrequencies.

\begin{table}[!h]
\begin{tabular}{|c||c|c|c|c|c|c|}
  \hline
  $(i+1)-(i+s)$ & $1-20$ & $1-50$ & $1-100$ & $101-120$ & $101-150$ & $201-220$ \\ \hline
  $\mathbf{p}$; \,\, Av. En. & 0.479 & 0.491 & 0.495  & 0.0045 & 0.004 &  0.0024     \\
  \hline
  $\mathbf{p}$; \,\, Av. Dis. & 0.381 & 0.382  & 0.42  & 0.0045 & 0.004 & 0.0024  \\
  \hline
\end{tabular}
\caption{Comparison between ``energy'' and ``displacement'' criterion.}
\label{Table5}
\end{table}

\quad

or

\quad

\begin{table}[!h]
\begin{tabular}{|c||c|c|c|c|c|c|}
  \hline
  $(i+1)-(i+s)$ & $1-5$ & $1-10$ & $11-15$ & $11-20$ & $31-35$ & $31-40$ \\ \hline
  $\mathbf{p}$; \,\, Av. En. & 0.435  & 0.459  &  0.0385  & 0.0317  & 0.045  & 0.014      \\
  \hline
  $\mathbf{p}$; \,\, Av. Dis. & 0.424  & 0.417   &  0.039  & 0.035   & 0.0455  &  0.0143 \\
  \hline
\end{tabular}
\caption{Comparison between ``energy'' and ``displacement'' criterion.}
\label{Table6}
\end{table}

From  both Tables \ref{Table5} and \ref{Table6}, one can see that in most of the cases the optimal positions coincide or they are close. The major difference is for the cases when one considers the first part of the spectrum.

\section{Conclusion and outlook}
\label{sec_conclusion}

The paper's main contribution is the study of two distinct optimization criteria for damping optimization in a multi-body oscillator system with arbitrary degrees of freedom ($n$),
which corresponds with the model of string/rod free vibrations. As the first result, we have shown that both criteria are equivalent with the trace minimization of the solution of the
Lyapunov equation with different right-hand sides. The second result we prove that the minimal trace for each criterion can be expressed as a simple (linear or cubic) function of
dimension $n$. In other words, the optimal position solely depends on the number of dominant eigenfrequencies and the optimal viscosity and does not depend on dimension $n$.
This highlights a simplified approach to damping optimization in such systems.

\textbf{Conjectures and future work:}

\textbf{1.}  Numerical experiments show the similar results
about the optimal positions of several dampers hold as well. Namely, the optimal positions of $r\geq2 $ dampers depend only on the number of dominant eigenfrequencies $s$  and their location $i$ ($\omega_{i+1} < \ldots < \omega_{i+s}$)  and does not depend on dimension $n$. It is true for both criteria.

We need to prove this, and we intend to tackle it in the near future.

\textbf{2.}  Study, the system of ODEs of the more general structure
like the vibration {chain} of masses and springs shown {on} Figure \ref{fig1} with general $m_i$ and $k_i$.

\textbf{3.}  Some general theoretical results about the stability of infinite dimensional systems with bounded spectrum. For example a deeper understanding of the spectral properties  with one or two or more dimensional dimensional damping.

\textbf{4.} Further study of the properties of both optimization criteria:
i) \textbf{The total average energy over all possible initial data}
and  ii) \textbf{The total average displacement over all possible initial data.} Especially a comparison between them in the sense of the quality of the solution and the complexity of the calculation.

\textbf{Acknowledgements:}
The first author was supported in part by the Croatian Science Foundation under the project Conduction, IP-2022-10-5191.
Both authors appreciate the reviewers' valuable feedback and insightful comments, which have improved the quality of this manuscript.


\bibliographystyle{plain}
\bibliography{bibliogr_opt_ab}

\begin{thebibliography}{10}

\bibitem{Aydin14}
E.~Aydin.
\newblock Minimum dynamic response of cantilever beams supported by optimal
  elastic springs.
\newblock {\em Structural Engineering and Mechanics}, 51(3), 2014.

\bibitem{BBCLGM2007}
D.~Bailey, J.~Borwein, N.~Calkin, R.~Luke, R.~Girgensohn, and V.~Moll.
\newblock {\em Experimental Mathematics in Action (1st ed.)}.
\newblock A K Peters CRC Press., 2007.

\bibitem{BGTQRSS21}
P.~Benner, S.~Grivet-Talocia, A.~Quarteroni, G.~Rozza, W.~Schilders, and
  M.~Silveira L.\, editors.
\newblock {\em Model Order Reduction}, volume 3: Applications.
\newblock De Gruyter, Berlin, Boston, 2021.

\bibitem{BenTomTruh2011}
P.~Benner, Z.~Tomljanović, and N.~Truhar.
\newblock Dimension reduction for damping optimization in linear vibrating
  systems.
\newblock {\em Journal of Applied Mathematics and Mechanics (ZAMM)},
  91(3):179--191, 2011.

\bibitem{BenTomTruh2013}
P.~Benner, Z.~Tomljanović, and N.~Truhar.
\newblock Optimal damping of selected eigenfrequencies using dimension
  reduction.
\newblock {\em Numerical Linear Algebra with Applications}, 20(1):1--17, 2013.

\bibitem{BenKurTomTruh2016}
Peter Benner, Patrick K\"urschner, Zoran Tomljanovi\'c, and Ninoslav Truhar.
\newblock Semi-active damping optimization of vibrational systems using the
  parametric dominant pole algorithm.
\newblock {\em Journal of Applied Mathematics and Mechanics (ZAMM)},
  96(5):604--619, 2016.

\bibitem{BRAB98}
K.~Brabender.
\newblock {\em Optimale {D}{\"a}mpfung von linearen {S}chwingungssystemen}.
\newblock PhD thesis, Fernuniversit{\"a}t Hagen, Hagen, 1998.

\bibitem{Cox:04}
S.~Cox, I.~Naki\'{c}, A.~Rittmann, and K.~Veseli\'{c}.
\newblock Lyapunov optimization of a damped system.
\newblock {\em Systems \& Control Letters}, 53:187--194, 2004.

\bibitem{DymarekDzitkowski21}
A.~Dymarek and T.~Dzitkowski.
\newblock The use of synthesis methods in position optimisation and selection
  of tuned mass damper (tmd) parameters for systems with many degrees of
  freedom.
\newblock {\em Archives of Control Sciences}, 31(LXVII)(1):185--21, 2021.

\bibitem{GurgozeMuller92}
M.~G\"{u}rg\"{o}ze and P.C. M\"{u}ller.
\newblock Optimal positioning of dampers in multi-body systems.
\newblock {\em Journal of Sound and Vibration}, 158:517--530, 1992.

\bibitem{AmmHerTucs01}
M.~Tucsnak K.~Ammari, A.~Henrot.
\newblock Asymptotic behaviour of the solutions and optimal location of the
  actuator for the pointwise stabilization of a string.
\newblock {\em Asymptotic Analysis}, 28(3--4):215--240, 2001.

\bibitem{AN15}
S.~Nicaise K.~Ammari.
\newblock {\em Stabilization of Elastic Systems by Collocated Feedback}.
\newblock Lecture Notes in Mathematics. Springer Cham, 2015.

\bibitem{YKanno2013}
Y.~Kanno.
\newblock Topology optimization of tensegrity structures under compliance
  constraint: a mixed integer linear programming approach.
\newblock {\em Optim Eng}, 14:61--96, 2013.

\bibitem{KookShenZhuLind21}
S.~Kookalani, D.~Shen, L.~L. Zhu, and M.~Lindsey.
\newblock An overview of optimal damper placement methods in structures.
\newblock {\em Iranian Journal of Science and Technology, Transactions of Civil
  Engineering}, 46(3), 2022.

\bibitem{MorzfKawMa2013}
F.~Ma M.~Morzfeld, D.~T.~Kawano.
\newblock Characterization of damped linear dynamical systems in free motion.
\newblock {\em Numerical Algebra, Control and Optimization}, 3(1):49--62, 2013.

\bibitem{MuellerWeber21}
P.~C. M{\"u}ller and W.~E. Weber.
\newblock Modal analysis and insights into damping phenomena of a special
  vibration chain.
\newblock {\em Archive of Applied Mechanics}, 91(5):2179--2187, 2021.

\bibitem{NAKIC02}
I.~Naki\'c.
\newblock {\em Optimal damping of vibrational systems}.
\newblock PhD thesis, Fernuniversit\"{a}t Hagen, Germany, 2002.

\bibitem{reddy2005}
J.~N. Reddy.
\newblock {\em An Introduction to the Finite Element Method}.
\newblock McGraw-Hill, 3rd edition, 2005.

\bibitem{SilvestriTrombetti06}
S.~Silvestri and T.~Trombetti.
\newblock On the modal damping ratios of shear-type structures equipped with
  {R}ayleigh damping systems.
\newblock {\em Journal of Sound and Vibration}, 292(2):21--58, 2006.

\bibitem{ITakewaki2009}
I.~Takewaki.
\newblock {\em Building Control with Passive Dampers: Optimal Performance-based
  Design for Earthquakes}.
\newblock John Wiley and Sons Ltd, United States, 2009.

\bibitem{Takewaki98}
Izuru Takewaki.
\newblock Optimal damper positioning in beams for minimum dynamic compliance.
\newblock {\em Computer Methods in Applied Mechanics and Engineering},
  156(1):363--373, 1998.

\bibitem{TRUH04}
N.~Truhar.
\newblock An efficient algorithm for damper optimization for linear vibrating
  systems using {L}yapunov equation.
\newblock {\em J. Comput. Appl. Math.}, 172(1):169--182, 2004.

\bibitem{TRUHVES05}
N.~Truhar and K.~Veseli\'{c}.
\newblock On some properties of the {L}yapunov equation for damped systems.
\newblock {\em Math. Commun.}, 9:189--197, 2004.

\bibitem{TRUHVES09}
N.~Truhar and K.~Veseli\'{c}.
\newblock An efficient method for estimating the optimal dampers' viscosity for
  linear vibrating systems using {Lyapunov} equation.
\newblock {\em SIAM J. Matrix Anal. Appl.}, 31(1):18--39, 2009.

\bibitem{VES89}
K.~Veseli\'{c}.
\newblock On linear vibrational systems with one dimensional damping.
\newblock {\em Appl Anal}, 29:1--18, 1988.

\bibitem{VES90}
K.~Veseli\'{c}.
\newblock On linear vibrational systems with one dimensional damping {II}.
\newblock {\em Integral Eq. Operator Th.}, 13:883--897, 1990.

\bibitem{VesBrabDel01}
K.~Veseli\'{c}, K.~Brabender, and K.~Delini\'c.
\newblock Passive control of linear systems.
\newblock {\em Applied Mathematics and Computation, M. Rogina et al. Eds. Dept.
  of Math. Univ. Zagreb}, pages 39--68, 2001.

\bibitem{SanAvilBer18}
J.L.V.~de~Brito W.E.D.~S\'{a}nchez, S.M.~Avila.
\newblock Optimal placement of damping devices in buildings.
\newblock {\em Journal of the Brazilian Society of Mechanical Sciences and
  Engineering}, 40(337), 2018.

\bibitem{LiuRaoZhang20}
B.~Rao Z.~Liu and Q.~Zhang.
\newblock Polynomial stability of the rao-nakra beam with a single internal
  viscous damping.
\newblock {\em Journal of Differential Equations}, 269(7):6125--6162, 2020.

\end{thebibliography}

\end{document}